\newcommand*\df{\mathop{}\!\mathrm{d}}
\newcommand{\bsx}{\boldsymbol{x}}
\newcommand{\bst}{\boldsymbol{t}}
\newcommand{\bsl}{\boldsymbol{\ell}}
\newcommand{\bsk}{\boldsymbol{k}}
\newcommand{\bsy}{\boldsymbol{y}}
\newtheorem{theorem}{Theorem}
\newtheorem{lemma}{Lemma}
\theoremstyle{remark}
\newtheorem{remark}{Remark}
\journal{Journal of Complexity}
\begin{document}

\begin{frontmatter}

\title{Quasi-Monte Carlo tractability of high dimensional integration over products of simplices}

\author{Kinjal Basu\corref{cor1}}
\address{Department of Statistics \\ Stanford University}

\cortext[cor1]{Corresponding author}
\ead{kinjal@stanford.edu}

\begin{abstract}
Quasi-Monte Carlo (QMC) methods for high dimensional integrals over unit cubes and products of spheres are well-studied in literature. We study QMC tractability of integrals of functions defined over the product of $m$ copies of the simplex $T^d \subset \mathbb{R}^{d}$. The domain is a tensor product of $m$ reproducing kernel Hilbert spaces defined by `weights' $\gamma_{m,j}$, for $j = 1,2, \ldots, m$. Similar to the results on the unit cube in $m$ dimensions, and the product of $m$ copies of the $d$-dimensional sphere, we prove that strong polynomial tractability holds iff $\limsup_{m \rightarrow \infty} \sum_{j=1}^m \gamma_{m,j} < \infty$ and polynomial tractability holds iff $\limsup_{m \rightarrow \infty} \frac{\sum_{j=1}^m \gamma_{m,j}}{\log(m + 1 )}  < \infty$. We also show that weak tractability holds iff $\lim_{m \rightarrow \infty} \frac{\sum_{j=1}^m \gamma_{m,j}}{m} = 0$. The proofs employ Sobolev space techniques and weighted reproducing kernel Hilbert space techniques for the simplex and products of simplices as domain. Properties of orthogonal polynomials on a simplex are also used extensively.
\end{abstract}

\begin{keyword}
Quasi-Monte Carlo methods \sep Multivariate integration \sep Product of simplices \sep Worst-case error \sep Tractability
\end{keyword}

\end{frontmatter}


\section{Introduction}
\label{sec1}
Integration over simplices is an important problem in computer graphics and light transport theory. This applies, in particular, to image rendering. The main problem is to calculate an integral of the form
\begin{equation}
\int_{T^d} \cdots \int_{T^d} f(x_1, \ldots, x_m) \df x_1 \df x_2 \cdots \df x_m,  
\end{equation}
where $d$  is usually small, $m$ can be very large and $T^d$ is the $d$-dimensional simplex defined by 
\[
T^d := \left\{ x \in \mathbb{R}^d : x_1 \ge 0, \ldots, x_d \ge 0, 1 - \sum_{i=1}^d x_i \ge 0 \right\}.
\]

Sampling over such product spaces is quite challenging. Most quasi-Monte Carlo (QMC) techniques are well developed for numerical integration of functions defined on the unit cube $[0,1]^m$.
The quantity $\mu := \int_{[0,1]^m} f(\bsx) \df \bsx  $ is
approximated by an equal weight rule
$\hat\mu_N := (1/N)\sum_{i=1}^N f(\bsx_i)$ for carefully chosen $\bsx_i\in[0,1]^m$.
The accuracy of such a QMC method can be measured using the Koksma-Hlawka inequality (see \cite{Niederreiter1992}) which suggests to use low-discrepancy sequences in order to reduce the error of numerical integration. However, they have a cost (in terms of the number of function evaluations) which grows exponentially with $m$. In this setting of the unit cube, Sloan and Wo\'{z}niakowski \cite{Sloan1998} find a class of functions for which the cost is bounded independently of $m$. (They use $d$ instead of $m$ for the dimension). Later on Kuo and Sloan \cite{Kuo2005} study numerical integration over products of unit spheres (in a fixed dimensional Euclidean space) and find a class of functions with the same property. In this paper, we study such a class of functions by considering the domain to be products of standard simplices.


	
Interest in numerical integration over the simplex is quite recent. Brandolini et al. \cite{Brandolini2013} develop a new Koksma-Hlawka type inequality on the simplex to study low-discrepancy sequences. Pillards and Cools \cite{Pillards2005} give a series of transformations from the unit cube to the simplex and very recently, Basu and Owen \cite{Basu2014} give two explicit low-discrepancy constructions on the triangle. 

Similar to \cite{Kuo2005, Sloan1998} we find a function class $H_m$, such that the number of function evaluations needed to reduce the initial error by a factor of $\epsilon$ is bounded independently of $m$. We also prove that error in numerical integration is $O(n^{-1/2})$  for functions belonging to the class $H_m$. We use the well-known, elegant and powerful method of reproducing kernel Hilbert spaces in the weighted case by combining it with tensor-product techniques. The smoothness condition of $r > d/2$ for the underlying space in the case of the sphere, ensures that these techniques work (see \cite{Kuo2005}). In our situation for the embedding results, point evaluations and the well-definiteness of worst-case error, we need $r > d+1$.  Here $r$ is the number of derivatives used in defining the inner product associated with the function class $H_m$. Our results are based on finding an orthonormal basis on the simplex which has similar properties to that of spherical harmonics on the sphere. A detailed overview of the connections between the sphere, ball and simplex can be found in \cite{xu2006analysis}. 

The rest of the paper is organized as follows. We describe the problem in Section \ref{sec2}. The preliminaries are explained in Section \ref{sec3}. We prove our theorem on QMC tractability in Section \ref{sec4}. Concluding remarks and discussions follow in Section \ref{sec5}.

\section{Numerical integration over products of simplices and tractability}
\label{sec2}
Let $T^d$ be the $d$-dimensional simplex defined by 
\begin{equation}
\label{td}
T^d := \{ x \in \mathbb{R}^d : x_1 \ge 0, \ldots, x_d \ge 0, 1 - |x| \ge 0 \},
\end{equation}
where $|x| := x_1 + \cdots + x_d.$ The purpose of this paper is to study the problem of integration on the product space $(T^d)^m := T^d \times T^d \times \cdots \times T^d$ of $m$ copies of the $d$-dimensional simplex $T^d$. Usually, $d$ is small and $m$ is very large. There are many papers on integration over the triangle $T^2$ and the simplex \cite{ Basu2014, pillards2004theoretical,  Pillards2005} but very little is known when the domain is the product of higher dimensional simplices. 

Let us begin with a few notations, which we use throughout the rest of the paper. We write the integral as 
\begin{equation}
\label{int}
I_m(f) := \frac{1}{c_m} \int_{(T^d)^m} f(\bsx) \df \bsx, 
\end{equation}
where $c_m  := \int_{(T^d)^m} 1 \df \bsx = 1/(d!)^m$ is the normalizing constant. The integrand is assumed to belong to some Sobolev space $H_m$ which is a tensor product of $m$ weighted reproducing kernel Hilbert spaces (RKHS), where the $j$th weighted RKHS is parametrized by weights $\gamma_{m,j}$ for $j = 1, \ldots, m$. We give the explicit definition in Section \ref{sec3}. 

We approximate the integral (\ref{int}) using the following QMC method:
\begin{equation}
\label{qmc}
Q_{n,m}(f) := \frac{1}{n} \sum_{i=1}^n f (\bst_i) = \frac{1}{n} \sum_{i=1}^n f(t_{i,1},\ldots, t_{i,m}),
\end{equation}
where $\bst_1, \ldots \bst_n \in (T^d)^m$. Following Sloan and Wo\'{z}niakowski \cite{Sloan1998}, we consider the worst-case error of $Q_{n,m}$ which is the worst-case performance of $Q_{n,m,}$ over the unit ball of $H_m$; i.e.,

\begin{equation}
\label{defenm}
e_{n,m} := e(Q_{n,m}) = \sup_{f \in H_m, ||f||_m \leq 1} |I_m(f) - Q_{n,m}(f)|,
\end{equation}
where $|| \cdot ||_m$ denotes the norm in $H_m$. For $n = 0$, we formally set $Q_{0,m} := 0$. The corresponding worst-case error is the initial error 
\[e_{0,m} := \sup_{f \in H_m, ||f||_m \leq 1} |I_m(f)|.
\]

We would like to reduce the initial error by a factor of $\epsilon$, where $\epsilon \in (0,1)$. Thus, we are looking for the smallest $n = n(\epsilon, m)$ for which $\bst_1, \ldots, \bst_n$ exist such that
$e_{n,m} \leq \epsilon e_{0,m}$. We can now define what we mean by QMC tractability. The general notion of tractability can be found in \cite{novak1997tractability, wozniakowski1994tractability, wozniakowski1994tractability2}. For a detailed account of tractability of multivariate problems, we refer the reader to the trilogy by Novak and Wo{\'z}niakowski \cite{novak2008tractability, novak2010tractability,novak2012tractability}. The integration problem (in the worst-case setting) is said to be `polynomial tractable' iff there exist non-negative $C, q,$ and $p$ such that 
\begin{equation}
\label{tract}
n(\epsilon,m) \le C \epsilon^{-p} m^q \qquad \forall m = 1,2, \ldots; \;\;\forall \epsilon \in (0,1). 
\end{equation}

If (\ref{tract}) holds, then the infima of $q$ and $p$ are called the $m$-exponent and $\epsilon$-exponent of tractability. The problem is said to be `strongly polynomial tractable' if (\ref{tract}) holds with $q=0$. Some years ago, a third relevant notion of tractability was introduced, namely `weak tractability' (see \cite{novak2010tractability} for details). The integration problem is said to be weakly tractable iff
\begin{align}
\lim_{\epsilon^{-1} + m \rightarrow \infty} \frac{\log n(\epsilon, m)}{\epsilon^{-1} + m} = 0.
\end{align} 

  In Section \ref{sec4}, we prove our results on the necessary and sufficient conditions for polynomial, strong polynomial and weak tractability. We will prove that if $\gamma_{m,j}$ are positive and uniformly bounded, then strong polynomial tractability holds iff
\[ \limsup_{m \rightarrow \infty} \sum_{j=1}^m \gamma_{m,j} < \infty,
\]
polynomial tractability holds iff 
\[ \limsup_{m \rightarrow \infty} \frac{\sum_{j=1}^m \gamma_{m,j}}{\log (m + 1)} < \infty
\]
and weak tractability holds iff
\[
\lim_{m \rightarrow \infty} \frac{\sum_{j=1}^m \gamma_{m,j}}{m} = 0.
\]
The conditions are exactly the same when the products of $m$ simplices and one unit cube of dimension $m$ are considered \cite{hickernell2001tractability, Sloan1998, sloan2002tractability}. It is also same when products of simplices and products of spheres of fixed dimension are considered. Kuo and Sloan \cite{Kuo2005} do not show the result for weak tractability but it easily follows from their proof. The tractability results in the case for product of simplices follow from non-constructive arguments. We are yet to develop QMC methods that achieve the lower bound of $O(n^{-1/2})$ like that in \cite{hesse2007component, kuo2002component, sloan2002step, sloan2002component, sloan2002constructing}.

\section{Preliminaries}
\label{sec3}

\subsection{The $d$-dimensional Simplex and Orthogonal Polynomials}
\label{sec3_1}
Let $L_2(T^d)$ be the space of square-integrable and measurable real-valued functions on $T^d$ provided with the inner product,
\begin{align}
\label{innerproduct}
 \langle f,g \rangle_{L_2(T^d)} = d!\int_{T^d} f(x) g(x) \df x.
\end{align}
The orthogonal polynomials with respect to this inner product have been studied extensively (see \cite{dunkl2001orthogonal}). Let $\mathcal{V}_\ell^d$ denote the space of orthogonal polynomials of degree $\ell$ with respect to this inner product.  Let $r_\ell^d = \dim \mathcal{V}_\ell^d$. It is well known that $r_\ell^d = \binom{\ell+d-1}{\ell}$. Akta\c{s} and Xu \cite{Aktas2012} obtained the following result regarding a basis of $\mathcal{V}_\ell^d$.

\begin{lemma}
For $\tilde{n} \in \mathbb{N}_0^d$ and $x \in \mathbb{R}^d$, one defines
\begin{equation}
P_{\tilde{n}}(x) := \frac{\partial^{|\tilde{n}|}}{\partial x^{\tilde{n}}} \left[ x^{\tilde{n}} (1 - |x|)^{|\tilde{n}|} \right],
\end{equation}
where $\frac{\partial^{|\tilde{n}|}}{\partial x^{\tilde{n}}} = \frac{\partial^{|\tilde{n}|}}{\partial x_1^{\tilde{n}_1} \ldots \partial x_d^{\tilde{n}_d}}$ and $x^{\tilde{n}} = x_1^{\tilde{n}_1} \ldots x_d^{\tilde{n}_d}$. $P_{\tilde{n}}$ are orthogonal polynomials with respect to constant function and $\{ P_{\tilde{n}} : |\tilde{n}| = \ell\}$ is a basis of $\mathcal{V}_\ell^d$.
\end{lemma}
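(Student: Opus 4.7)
The plan is to establish three points in order: (i) each $P_{\tilde n}$ is a polynomial of degree at most $|\tilde n|$; (ii) $P_{\tilde n}$ is orthogonal (in the $L_2(T^d)$ inner product) to every polynomial of strictly smaller total degree, hence lies in $\mathcal V_{|\tilde n|}^d$; and (iii) the collection $\{P_{\tilde n}:|\tilde n|=\ell\}$ is linearly independent. Since the number of multi-indices $\tilde n\in\mathbb N_0^d$ with $|\tilde n|=\ell$ equals $\binom{\ell+d-1}{d-1}=r_\ell^d=\dim \mathcal V_\ell^d$, (ii) and (iii) together will identify the family as a basis. Item (i) is immediate: $x^{\tilde n}(1-|x|)^{|\tilde n|}$ is a polynomial of degree $2|\tilde n|$, so after applying $|\tilde n|$ partial derivatives (counted with multiplicity) one is left with a polynomial of degree at most $|\tilde n|$.

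For (ii), the natural move is repeated integration by parts, variable by variable, on
\[
\int_{T^d} P_{\tilde n}(x)\, Q(x)\, \df x
\qquad\text{for } Q\in\Pi^d_{|\tilde n|-1}.
\]
I would integrate by parts $\tilde n_1$ times in $x_1$, then $\tilde n_2$ times in $x_2$, and so on. The crucial point is that every boundary contribution vanishes: by Leibniz's rule, any mixed partial of $x^{\tilde n}(1-|x|)^{|\tilde n|}$ of total order strictly less than $|\tilde n|$ still carries a positive power of $x_i$ (killing the trace on the face $x_i=0$) and a positive power of $(1-|x|)$ (killing the trace on $|x|=1$). Collecting signs gives
\[
\int_{T^d} P_{\tilde n}(x)\, Q(x)\, \df x
= (-1)^{|\tilde n|}\int_{T^d} x^{\tilde n}(1-|x|)^{|\tilde n|}\,\frac{\partial^{|\tilde n|}Q}{\partial x^{\tilde n}}(x)\,\df x,
\]
and when $\deg Q<|\tilde n|$ the right-hand derivative vanishes identically. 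Hence $P_{\tilde n}\perp \Pi^d_{|\tilde n|-1}$, placing $P_{\tilde n}\in\mathcal V_{|\tilde n|}^d$ as soon as we know it is nonzero.

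For (iii) and nonvanishing simultaneously, I would test against the monomials $Q=x^{\tilde m}$ with $|\tilde m|=|\tilde n|=\ell$ in the same identity. The mixed partial $\partial^{|\tilde n|}x^{\tilde m}/\partial x^{\tilde n}$ is nonzero iff $\tilde m\ge \tilde n$ componentwise, which forces $\tilde m=\tilde n$ under the common-degree constraint and then equals the constant $\tilde n!$. Therefore
\[
\langle P_{\tilde n},x^{\tilde m}\rangle_{L_2(T^d)}
=\begin{cases}
(-1)^{|\tilde n|}\,d!\,\tilde n!\displaystyle\int_{T^d} x^{\tilde n}(1-|x|)^{|\tilde n|}\df x, & \tilde m=\tilde n,\\[2pt]
0, & \tilde m\neq \tilde n,
\end{cases}
\]
and the diagonal integral is a positive Dirichlet-type quantity. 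Pairing any relation $\sum_{|\tilde n|=\ell}c_{\tilde n}P_{\tilde n}=0$ with $x^{\tilde m}$ then yields $c_{\tilde m}=0$ for each $\tilde m$, and as a byproduct confirms $P_{\tilde n}\neq 0$, so $P_{\tilde n}\in\mathcal V_\ell^d$ has exact degree $\ell$. The only delicate step is the boundary-term bookkeeping in the iterated integration by parts; everything else is counting and a one-line Leibniz computation.
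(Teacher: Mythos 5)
The paper does not actually prove this lemma; it simply defers to Dunkl and Xu. Your argument is the standard Rodrigues-formula proof that the cited reference uses, and it is correct: degree count, iterated integration by parts to get $P_{\tilde n}\perp\Pi^d_{|\tilde n|-1}$, and pairing against the monomials $x^{\tilde m}$ with $|\tilde m|=\ell$ to get linear independence and nonvanishing, combined with the dimension count $\#\{\tilde n:|\tilde n|=\ell\}=r_\ell^d=\dim\mathcal V_\ell^d$ that the paper records just before the lemma. So you supply a complete proof where the paper supplies none.

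One spot is stated more loosely than it should be: it is not true that \emph{every} mixed partial of $x^{\tilde n}(1-|x|)^{|\tilde n|}$ of total order $<|\tilde n|$ carries a positive power of $x_i$ for \emph{every} $i$ (e.g.\ for $\tilde n=(1,1)$, $\partial_{x_1}[x_1x_2(1-|x|)^2]$ contains the term $x_2(1-|x|)^2$ with $x_1$-power zero). What saves the argument is that in the iterated scheme you only ever evaluate boundary traces in the variable currently being integrated: at the $k$-th integration by parts in $x_i$ the partial of $x^{\tilde n}(1-|x|)^{|\tilde n|}$ appearing on the boundary has $x_i$-order $\tilde n_i-k<\tilde n_i$ and total order $<|\tilde n|$, so by Leibniz every term retains $x_i$ to a power at least $k\ge 1$ (killing the trace on $x_i=0$) and $(1-|x|)$ to a power at least $1$ (killing the trace on $|x|=1$); the faces $x_j=0$, $j\ne i$, are not boundaries of the inner $x_i$-integral and need no control. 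With the claim phrased this way the bookkeeping closes, and the rest of your proof (the vanishing of $\partial^{\tilde n}Q$ for $\deg Q<|\tilde n|$, the identity $\partial^{\tilde n}x^{\tilde m}=\tilde n!\,\delta_{\tilde m,\tilde n}$ under the common-degree constraint, and the positivity of the Dirichlet integral $\int_{T^d}x^{\tilde n}(1-|x|)^{|\tilde n|}\df x$) is sound.
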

For the proof of this lemma, see \cite{dunkl2001orthogonal}. Note that the set $B_{\ell} = \{ P_{\tilde{n}} : |\tilde{n}| = \ell\}$ has cardinality $r_\ell^d$ and we can order the elements of $B_{\ell}$ with some fixed ordering parametrized by $k$, where $1 \leq k \leq r_\ell^d$. Further, we can convert this basis into an orthonormal basis. For simplicity, let us denote the orthonormal basis as $\{P_{\ell,k}, 1 \leq k \leq r_\ell^d\}$. We shall now discuss a few properties of this orthonormal basis which we will use in our proofs in later sections. For more details, we refer to \cite{Aktas2012}.

\subsubsection {Orthonormality}
By orthonormality of $\{P_{\ell,k}\}$ we mean 
\[ \langle P_{\ell_1,k}, P_{\ell_2,j}  \rangle = d! \int_{T^d} P_{\ell_1,k}(x) P_{\ell_2,j}(x) \df x = \delta_{j,k}\delta_{\ell_1,\ell_2}, \qquad 
\]
where $\delta_{a,b} = 1$ if $a = b$ and $0$ otherwise. Xu \cite{Xu2001} gives more details regarding orthogonal polynomials on simplices and cubature formulae.

\subsubsection{Summability}
Summability of the orthonormal polynomials is important for explicitly defining the reproducing kernel Hilbert space, as we discuss later. Let us define,
\[P_\ell(x,y) := \sum_{k=1}^{r_\ell^d} P_{\ell,k}(x) P_{\ell,k}(y) = [P_\ell(x)]^T[P_\ell(y)],
\]
where $P_\ell = \left(P_{\ell,1}, \ldots, P_{\ell,r_\ell^d}\right)^T$. Note that, any other orthonormal basis of $\mathcal{V}_\ell^d$ can be obtained by multiplying an orthogonal matrix to $P_\ell$. Thus, $P_\ell(x,y)$ is independent of the choice of the orthonormal basis. Xu \cite{Xu1998} gives a closed form for this sum which we will use in our proofs. Theorem 2.2 of \cite{Xu1998} states that
\[
P_\ell(x,y) = \frac{2\ell + d}{(2\pi)^{d+1}d}  
\int_{[-1,1]^{d+1}} C_{2\ell}^{(d)}\left(\sum_{i=1}^{d+1} \sqrt{x_iy_i}t_i\right) \prod_{i=1}^{d+1}(1 - t_i^2)^{ -\frac{1}{2}} \df \bst,
\]
where $x_{d+1} = 1 - |x|, y_{d+1} = 1 - |y|$, and $C_{n}^{(\lambda)}$ is the Gegenbauer polynomial of degree $n = 2\ell$ and parameter $\lambda = d$.   More details about the Gegenbauer polynomial can be found in \cite{dunkl2001orthogonal,  Olver:2010:NHMF, reimer1990constructive}.

\subsubsection{Eigenfunctions}
Consider a partial differential operator $\nabla$ defined as
\[
\nabla := \sum_{i=1}^d x_i(1-x_i) \frac{\partial^2}{\partial x_i^2} - 2 \sum_{1 \leq i \leq j \leq d} x_i x_j \frac{\partial^2}{\partial x_i \partial x_j} + \sum_{i=1}^d( 1 - (d + 1)x_i) \frac{\partial }{\partial x_i}.
\]
It is well known (see \cite{Aktas2012}) that for all $P \in \mathcal{V}_\ell^d$, we have 
\[
\begin{split}
\nabla P = -\ell(\ell + d )P.
\end{split}
\]
In other words, orthogonal polynomials of degree $\ell$ are eigenfunctions of the second order partial differential operator $\nabla$ with eigenvalue $-\ell(\ell + d)$. Hence for our orthonormal basis $\{P_{\ell,k}\}$, we get for all $k \in \{1, \ldots, r_\ell^d\}$,
\[ (-\nabla)P_{\ell,k} = \ell(\ell + d)P_{\ell,k}.
\]
For any $r > 0$, we define the (pseudo-differential) operator $(-\nabla)^{\frac{r}{2}}$ by
\[
(-\nabla)^{\frac{r}{2}}P_{\ell,k} := [\ell(\ell+ d)]^{\frac{r}{2}}P_{\ell,k}.
\]
Since $-\nabla$ is a second order partial differential operator, we can intuitively think of $(-\nabla)^{\frac{r}{2}}$ as the $r$-th derivative operator.

\begin{remark}
 Throughout this Section, we work with the inner product on $L^2(T^d)$ as defined in (\ref{innerproduct}), where the integration is with respect to the constant weight of $d!$. There also exists a classical weight function on the $d$-dimensional simplex $T^d$ defined as in \cite{Xu2001} by
\[W_{\boldsymbol{\alpha}}(x) := w_{\boldsymbol{\alpha}} x_1^{\alpha_1} \cdots x_d^{\alpha_d}(1 - |x|)^{\alpha_{d+1}}
\qquad \text{ for }\;\; \alpha_1,\ldots,\alpha_d,\alpha_{d+1} > -1,\]
where $w_{\boldsymbol{\alpha}}$ is such that $\int_{T^d} W_{\boldsymbol{\alpha}}(x) \df x = 1$. With this weight function, the corresponding inner product can be defined as,
\begin{align*}
 \langle f,g \rangle_{\boldsymbol{\alpha}} = \int_{T^d} f(x) g(x) W_{\boldsymbol{\alpha}}(x) \df x.
\end{align*}
Choosing $\boldsymbol{\alpha} = \boldsymbol{0}$, we get our specific case. Most of the results in this Section can be generalized to arbitrary $\boldsymbol{\alpha}$. For more details on the general weight function and orthogonal polynomials, see \cite{Aktas2012, dunkl2001orthogonal, Xu2001}.
\end{remark}

In the following lemmas we summarize the properties of the function $P_\ell(x,y)$ that we will need in our proofs.

\begin{lemma} 
\label{uprbound}
For $\ell \geq 1$ and $x,y \in T^d$, we have $P_\ell(x,y) \leq M\ell^{2d}$, where $M$ is a constant depending only on $d$.
\end{lemma}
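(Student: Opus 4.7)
The plan is to estimate $P_\ell(x,y)$ directly from the integral representation of Xu quoted just before the lemma, namely
\[
P_\ell(x,y) = \frac{2\ell + d}{(2\pi)^{d+1}d}
\int_{[-1,1]^{d+1}} C_{2\ell}^{(d)}\!\left(\sum_{i=1}^{d+1} \sqrt{x_i y_i}\, t_i\right) \prod_{i=1}^{d+1}(1 - t_i^2)^{-\tfrac{1}{2}} \df \bst,
\]
with $x_{d+1} = 1 - |x|$, $y_{d+1} = 1 - |y|$. The strategy is to bound the Gegenbauer polynomial by its value at $1$ (which requires showing its argument lies in $[-1,1]$), pull that bound out of the integral, and compute the remaining one-dimensional integrals in closed form.

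First I would check that $\bigl|\sum_{i=1}^{d+1}\sqrt{x_iy_i}\,t_i\bigr|\le 1$ whenever $x,y\in T^d$ and $t\in[-1,1]^{d+1}$. Since $x_i,y_i\ge 0$ and $\sum_{i=1}^{d+1} x_i = \sum_{i=1}^{d+1} y_i = 1$, the Cauchy--Schwarz inequality gives $\sum_{i=1}^{d+1}\sqrt{x_iy_i}\le \bigl(\sum x_i\bigr)^{1/2}\bigl(\sum y_i\bigr)^{1/2} = 1$, and combining with $|t_i|\le 1$ yields the claim.

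Next I would use the classical fact that for $\lambda>0$ the Gegenbauer polynomial $C_n^{(\lambda)}$ attains its maximum on $[-1,1]$ at the endpoint, so
\[
\Bigl|C_{2\ell}^{(d)}(z)\Bigr| \le C_{2\ell}^{(d)}(1) = \binom{2\ell + 2d - 1}{2\ell},
\qquad |z|\le 1,
\]
and Stirling (or a direct ratio argument) gives $C_{2\ell}^{(d)}(1) = O(\ell^{2d-1})$ with a constant depending only on $d$. The weight integrates as $\int_{-1}^{1}(1-t^2)^{-1/2}\df t = \pi$, so the product over the $d+1$ coordinates contributes $\pi^{d+1}$. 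Combining these pieces,
\[
P_\ell(x,y) \le \frac{2\ell + d}{(2\pi)^{d+1}d}\cdot \binom{2\ell+2d-1}{2\ell}\cdot \pi^{d+1}
= O(\ell)\cdot O(\ell^{2d-1}) = O(\ell^{2d}),
\]
which is exactly the stated bound, with $M$ depending only on $d$.

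The proof is essentially routine once the integral representation is in hand; the only genuine step is the Cauchy--Schwarz check that keeps the argument of $C_{2\ell}^{(d)}$ inside $[-1,1]$, and the book-keeping to see that the leading behaviour of $C_{2\ell}^{(d)}(1)$ is $\ell^{2d-1}$ rather than something larger. I would not expect any serious obstacle; the main thing to be careful about is making the constant $M$ explicit in $d$ but independent of $\ell$, which follows from the Stirling estimate for the binomial coefficient.
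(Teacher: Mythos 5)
Your proposal is correct and follows essentially the same route as the paper: bound the argument of the Gegenbauer polynomial in $[-1,1]$, use $|C_{2\ell}^{(d)}(z)|\le C_{2\ell}^{(d)}(1)=\binom{2\ell+2d-1}{2\ell}=O(\ell^{2d-1})$, integrate the weight to get $\pi^{d+1}$, and collect the factors. Your explicit Cauchy--Schwarz verification that $\sum_i\sqrt{x_iy_i}\le 1$ is a small point the paper states without justification, so your write-up is if anything slightly more complete.
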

\begin{proof}
We have
\begin{equation}
\label{pneqn}
P_\ell(x,y) = \frac{2\ell + d}{(2\pi)^{d+1}d} 
\int_{[-1,1]^{d+1}} C_{2\ell}^{(d)}\left(\sum_{i=1}^{d+1} \sqrt{x_iy_i}t_i\right) \prod_{i=1}^{d+1}(1 - t_i^2)^{ -\frac{1}{2}} \df \bst.
\end{equation}
Since $x,y \in T^d$, we have $ -1 \leq \sum_{i=1}^{d+1}{\sqrt{x_iy_i}t_i} \leq 1$. The Gegenbauer polynomials have the following properties (see \cite{dunkl2001orthogonal}) :
\begin{align}
\label{genen_prop}
 \left|C_{\ell}^{(\lambda)}(x)\right| \leq C_\ell^{(\lambda)}(1)\;\; \text{ for } x \in [-1,1], \qquad C_\ell^{(\lambda)}(-x) = (-1)^\ell C_\ell^{(\lambda)}(x).
\end{align}
Thus, we get
\[\begin{split} 
P_\ell(x,y) &\leq \frac{(2\ell + d)C_{2\ell}^{(d)}(1)}{(2\pi)^{d+1}d} 
\int_{[-1,1]^{d+1}} \prod_{i=1}^{d+1}(1 - t_i^2)^{ -\frac{1}{2}} \df \bst\\
&= \frac{(2\ell + d)C_{2\ell}^{(d)}(1)}{(2\pi)^{d+1}d} 
\prod_{i=1}^{d+1} \int_{[-1,1]} (1 - t_i^2)^{ -\frac{1}{2}} \df t_i\\
&=  \frac{(2\ell + d)C_{2\ell}^{(d)}(1)}{2^{d+1}d},
\end{split}
\]
where the last equality follows from $ \int_{[-1,1]} (1 - t_i^2)^{ -\frac{1}{2}} \df t_i = \pi$. Now from \cite{dunkl2001orthogonal} we know $C_{2\ell}^{(d)}(1) = \binom{2\ell + 2d - 1}{2\ell}$. Thus, we have
\[ 
\begin{split}
P_\ell(x,y) &\leq \frac{2\ell + d}{2^{d+1}d}C_{2\ell}^{(d)}(1) = \frac{2\ell + d}{2^{d+1}d} \binom{2\ell + 2d - 1}{2\ell}\\
&= \frac{\left(2\ell + d\right)}{2^{d+1}d} \frac{(2\ell + 2d - 1) \ldots (2\ell + 1)}{(2d-1)!}\\
&\leq \frac{2\ell + d}{2^{d+1}d} \tilde{M} \ell^{2d-1} \leq M\ell^{2d},
\end{split}
\]
where $\tilde{M}$ and $M$ depend only on $d$.
\end{proof}

\begin{lemma} 
\label{ortho}
For $\ell \geq 1$ and for any $ x \in T^d$,
\[\int_{T^d} P_\ell(x,y) \df y = 0.
\]
\end{lemma}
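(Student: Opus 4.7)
The plan is to reduce the claim to the orthogonality of $P_{\ell,k}$ (for $\ell \ge 1$) to the constant function. Since
\[
P_\ell(x,y) = \sum_{k=1}^{r_\ell^d} P_{\ell,k}(x) P_{\ell,k}(y),
\]
the factor $P_{\ell,k}(x)$ is independent of $y$, so by linearity
\[
\int_{T^d} P_\ell(x,y) \df y = \sum_{k=1}^{r_\ell^d} P_{\ell,k}(x) \int_{T^d} P_{\ell,k}(y) \df y.
\]
Therefore it suffices to show that $\int_{T^d} P_{\ell,k}(y)\df y = 0$ for every $1 \le k \le r_\ell^d$ whenever $\ell \ge 1$.

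The key observation is that the orthonormal system $\{P_{\ell,k}\}$ was built from the spaces $\mathcal{V}_\ell^d$, and $\mathcal{V}_0^d$ consists of the constant polynomials. The normalization condition forces the unique element of the basis in $\mathcal{V}_0^d$ to be the constant $1$: writing this element as $P_{0,1}(y) = c$, orthonormality requires $1 = d!\int_{T^d} c^2 \df y = c^2$, since $\int_{T^d} 1\df y = 1/d!$. Hence $c = 1$. (Alternatively, one can observe directly that each $P_{\ell,k}$ with $\ell \ge 1$ lies in $\mathcal{V}_\ell^d$, which is orthogonal to $\mathcal{V}_0^d$.)

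Applying the orthonormality relation
\[
\langle P_{\ell,k}, P_{0,1}\rangle_{L_2(T^d)} = d!\int_{T^d} P_{\ell,k}(y)\cdot 1 \df y = \delta_{\ell,0}\delta_{k,1} = 0
\]
for $\ell \ge 1$ yields $\int_{T^d} P_{\ell,k}(y)\df y = 0$, and substituting back completes the proof. There is no real obstacle here; the only subtlety is correctly identifying the normalizing factor $d!$ in the inner product (\ref{innerproduct}) so that the constant basis element really is $1$ and the orthogonality statement gives exactly $\int_{T^d} P_{\ell,k}(y)\df y = 0$ rather than a nonzero multiple of it.
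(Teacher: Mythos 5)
Your proof is correct and follows essentially the same route as the paper's: expand $P_\ell(x,y)$, pull out the $y$-independent factor, and use orthogonality of $P_{\ell,k}$ ($\ell \ge 1$) to the constant $P_{0,1}=1$. The only cosmetic difference is that you identify $P_{0,1}=1$ via the normalization $d!\int_{T^d}c^2\df y = 1$, while the paper computes it directly from the explicit formula for $P_{\tilde n}$; both are fine.
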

\begin{proof}
Note that
\[
\begin{split}
\int_{T^d}P_\ell(x,y)\df y &= \int_{T^d} \sum_{k=1}^{r_\ell^d} P_{\ell,k}(x) P_{\ell,k}(y) \df y \\
&=  \sum_{k=1}^{r_\ell^d} P_{\ell,k}(x) \int_{T^d}  P_{\ell,k}(y) \df y.
\end{split}
\]
Now, for $\ell = 0, r_0^d = 1$. Thus, $P_{0,1}(x) = \frac{\partial^{|\boldsymbol{0}|}}{\partial x^{\boldsymbol{0}}} \left[ x^{\boldsymbol{0}} (1 - |x|)^{|\boldsymbol{0}|} \right] = 1$. Using this, we can write the above integral as
\[
\begin{split}
\int_{T^d}P_\ell(x,y)\df y &=  \sum_{k=1}^{r_\ell^d} P_{\ell,k}(x) \int_{T^d}P_{0,1}(y) P_{\ell,k}(y) \df y = 0.
\end{split}
\]
The last equality follows from the fact that on $T^d$ the basis functions $\{P_{\ell,k}\}$ are orthonormal with respect to the constant weight function.
\end{proof}

 With these definitions and notations we can now define our Sobolev spaces.

\subsection{The Sobolev Space $H_1$}
Let $f \in L_2(T^d)$.  The Fourier expansion of $f$ with respect to the family of orthonormal polynomials $\{P_{\ell,k} : 1\leq k \leq r_\ell^d, \ell = 0,1,2,\ldots \}$ is given by
\begin{equation}
\label{fourier}
 f(x) \sim \sum_{\ell=0}^\infty \sum_{k = 1}^{r_\ell^d} a_k^\ell(f) P_{\ell,k}(x), \qquad \textnormal{where} \qquad a_k^\ell(f) = d!\int_{T^d}f(x)P_{\ell,k}(x) \df x.
\end{equation} 
We denote by $\Pi^d$ the set of polynomials in $d$ variables on $T^d$ and by $\Pi_\ell^d$ the subset of polynomials of degree at most $\ell$, that is $\Pi^d = \cup_{\ell=0}^\infty \Pi_\ell^d$. For $r > 0$ and $\gamma > 0$, we define the Sobolev space $H_1 := H_{1,\gamma}^{(d,r)} \subseteq L_2(T^d)$ as the closure of $\Pi^d$ with respect to the norm
\[
||f||_1^2 := I_1(f)^2 + \frac{1}{\gamma}|| (-\nabla)^{\frac{r}{2}}f||_{L_2(T^d)}^2,
\]
where $I_1(f) = d! \int_{T^d} f(x) \df x$ and $(-\nabla)^{\frac{r}{2}}f(x) =  \sum_{\ell=0}^\infty\sum_{k=1}^{r_\ell^d}[\ell(\ell+ d)]^{\frac{r}{2}}a_k^\ell(f)P_{\ell,k}(x)$. Thus, it follows from Parseval's Theorem that for $r > 0$ and $f \in H_1$,
\[\begin{split}
||f||_1^2 &= a_1^0(f)^2 + \frac{1}{\gamma} \sum_{\ell=1}^\infty\sum_{k=1}^{r_\ell^d}[\ell(\ell+d)]^{r}a_k^\ell(f)^2\\
&=  \sum_{\ell=0}^\infty\sum_{k=1}^{r_\ell^d}B_{d,r,\gamma}(\ell)a_k^\ell(f)^2 <\infty,
\end{split}
\]
where 
\[B_{d,r,\gamma}(\ell) = \begin{cases}
1 & \textnormal{ if } \ell = 0\\
\frac{1}{\gamma}[\ell(\ell+d)]^r & \textnormal{ if } \ell \ge 1.
\end{cases}
\]
Thus, for any two functions $f,g \in H_1$ we can define the inner product as
\[ \langle f, g \rangle_1 := \sum_{\ell=0}^\infty\sum_{k=1}^{r_\ell^d}B_{d,r,\gamma}(\ell)a_k^\ell(f)a_k^\ell(g). 
\]
We now give a condition for the convergence of the Fourier series (\ref{fourier}), which is different from the condition in Kuo and Sloan \cite{Kuo2005}. The uniform convergence result for the Fourier series expansion on the simplex are more restrictive than for the unit sphere.

\begin{lemma}\label{uniform}
Let $f \in H_1$. Then the Fourier series of $f$ given in (\ref{fourier}) converges uniformly if $r >  d+1$.
\end{lemma}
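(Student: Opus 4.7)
The plan is to show that the partial sums $S_L(x) := \sum_{\ell=0}^{L} \sum_{k=1}^{r_\ell^d} a_k^\ell(f) P_{\ell,k}(x)$ form a uniformly Cauchy sequence in $x \in T^d$. Each $S_L$ is a polynomial, hence continuous, so a uniform Cauchy property will both give convergence of the Fourier series and continuity of the limit.

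First I would fix $L' < L$ and apply the Cauchy--Schwarz inequality to the tail $S_L(x) - S_{L'}(x)$, splitting the summand as
\[
a_k^\ell(f) P_{\ell,k}(x) = \Bigl(\sqrt{B_{d,r,\gamma}(\ell)}\, a_k^\ell(f)\Bigr)\Bigl(P_{\ell,k}(x)/\sqrt{B_{d,r,\gamma}(\ell)}\Bigr).
\]
This gives
\[
|S_L(x)-S_{L'}(x)|^2 \le \Bigl(\sum_{\ell=L'+1}^{L}\sum_{k} B_{d,r,\gamma}(\ell)\, a_k^\ell(f)^2\Bigr)\Bigl(\sum_{\ell=L'+1}^{L}\sum_{k} \frac{P_{\ell,k}(x)^2}{B_{d,r,\gamma}(\ell)}\Bigr).
\]
The first factor is a tail of the convergent series defining $\|f\|_1^2$, so it is bounded by $\|f\|_1^2$ and tends to $0$ as $L' \to \infty$, independently of $x$.

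For the second factor, I would use that $\sum_{k=1}^{r_\ell^d} P_{\ell,k}(x)^2 = P_\ell(x,x)$ is basis-independent, and apply Lemma \ref{uprbound} to get $P_\ell(x,x) \le M \ell^{2d}$ for $\ell \ge 1$, uniformly in $x \in T^d$. Since $B_{d,r,\gamma}(\ell) = [\ell(\ell+d)]^r/\gamma \ge \ell^{2r}/\gamma$ for $\ell \ge 1$, the second factor is dominated by
\[
1 + M\gamma \sum_{\ell=1}^{\infty} \ell^{2d-2r}.
\]
This series converges whenever $2r - 2d > 1$, which is strictly weaker than the hypothesis $r > d+1$. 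Hence the second factor is bounded by a constant depending only on $d, r, \gamma$, uniformly in $x$.

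Combining the two bounds, $\sup_{x \in T^d}|S_L(x) - S_{L'}(x)|$ is controlled by the tail of $\|f\|_1^2$ times a constant, and therefore tends to $0$ as $L' \to \infty$, which is precisely uniform convergence. The only ingredient beyond routine Cauchy--Schwarz manipulation is the uniform pointwise bound on the reproducing-kernel-like quantity $P_\ell(x,x)$, which is exactly what Lemma \ref{uprbound} supplies; the mild gap between the natural threshold $r > d + \tfrac12$ produced by this argument and the stated $r > d+1$ is harmless slack.
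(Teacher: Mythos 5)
Your proof is correct, and it takes a genuinely different (and slightly sharper) route than the paper. The paper proves the lemma via the Weierstrass M-test applied to the degree blocks $f_\ell(x)=\sum_{k}a_k^\ell(f)P_{\ell,k}(x)$: it applies Cauchy--Schwarz \emph{within each degree} $\ell$ to get $|f_\ell(x)|\le \|f\|_1\sqrt{P_\ell(x,x)/B_{d,r,\gamma}(\ell)}\le \|f\|_1\sqrt{M\gamma}\,\ell^{d-r}=:M_\ell$, and then needs $\sum_\ell M_\ell<\infty$, which forces $r>d+1$. You instead apply a single Cauchy--Schwarz \emph{across the whole tail} and verify the uniform Cauchy criterion, so you only need $\sum_\ell P_\ell(x,x)/B_{d,r,\gamma}(\ell)\lesssim\sum_\ell\ell^{2d-2r}<\infty$, i.e.\ $r>d+\tfrac12$. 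The difference is exactly sum-of-square-roots versus square-root-of-sums: $\sum_\ell\sqrt{a_\ell b_\ell}$ can diverge even when $\bigl(\sum_\ell a_\ell\bigr)^{1/2}\bigl(\sum_\ell b_\ell\bigr)^{1/2}$ is finite, so your bound is less lossy. In fact your argument is the same device the paper itself uses later to prove the point-evaluation bound $|f(x)|\le(1+\gamma c_{d,r})^{1/2}\|f\|_1$ (where the relevant series $c_{d,r}$ likewise converges already for $r>d+\tfrac12$); you have simply transplanted it to the uniform-convergence statement, which shows the lemma actually holds under the weaker hypothesis $r>d+\tfrac12$. Under the stated hypothesis $r>d+1$ everything you write goes through, and your observation that the extra half-derivative is ``harmless slack'' is accurate. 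One cosmetic remark: since the tail starts at $\ell=L'+1\ge1$, the additive $1$ in your bound for the second factor is unnecessary, but it does no harm.
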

\begin{proof}
We shall show that the Weierstrass' M-test holds for $r > d+1$ and thereby the series $\sum_{\ell=0}^{\infty} f_\ell(x)$ converges uniformly on $T^d$, where $f_\ell(x) = \sum_{k=1}^{r_\ell^d}a_k^\ell(f) P_{\ell,k}(x)$. To apply the Weierstrass' M-test, we first find a bound $M_\ell$ such that $|f_\ell(x)| \leq M_\ell$ for all $\ell \geq 0$ and all $x \in T^d$. Observe that for $\ell \geq 1$,
\begin{align*}
|f_\ell(x)| &= \left|\sum_{k=1}^{r_\ell^d}a_k^\ell(f) P_{\ell,k}(x)\right|\\
&= \left|\sum_{k=1}^{r_\ell^d}a_k^\ell(f) B_{d,r,\gamma}(\ell) ^{1/2} P_{\ell,k}(x) B_{d,r,\gamma}(\ell) ^{-1/2}\right|\\
&\leq \left(\sum_{k=1}^{r_\ell^d}a_k^\ell(f)^2 B_{d,r,\gamma}(\ell)\right)^{1/2}\left(\sum_{k=1}^{r_\ell^d}\frac{P_{\ell,k}(x)P_{\ell,k}(x)}
{B_{d,r,\gamma}(\ell)}\right)^{1/2}\\
&\leq ||f||_1 \left(\frac{P_\ell(x,x)}{B_{d,r,\gamma}(\ell)}\right)^{1/2} \leq ||f||_1 \sqrt{M} \frac{\ell^d}{\sqrt{B_{d,r,\gamma}(\ell)}} =: M_\ell
\end{align*}
by the Cauchy-Schwarz inequality; we have

 $$\sum_{k=1}^{r_\ell^d}a_k^\ell(f)^2 B_{d,r,\gamma}(\ell) \leq \sum_{\ell=0}^{\infty}\sum_{k=1}^{r_\ell^d}a_k^\ell(f)^2 B_{d,r,\gamma}(\ell) = ||f||_1^2 ;$$ and finally the last inequality follows from Lemma \ref{uprbound}.
For $\ell = 0$, we have
\[ |f_0(x)| = |a_1^0(f)P_{0,1}(x)| = |a_1^0(f)| = |I_1(f)| =: M_0
\]
Now,
\[\begin{split} 
\sum_{\ell=0}^{\infty}M_\ell &= M_0 + \sum_{\ell=1}^{\infty}M_\ell  = |I_1(f)| + ||f||_1\sqrt{M} \sum_{\ell=1}^\infty \frac{\ell^d}{\sqrt{B_{d,r,\gamma}(\ell)}}\\
&=  |I_1(f)| + ||f||_1\sqrt{M \gamma} \sum_{\ell=1}^\infty \frac{\ell^d}{[\ell(\ell+d)]^{r/2}}\\
& \leq |I_1(f)| + ||f||_1\sqrt{M \gamma} \sum_{\ell=1}^\infty \ell^{d-r}.
\end{split}
\] 
Thus, $\sum_{\ell=0}^{\infty}M_\ell < \infty$ if $ r > d+1$. Hence, if $r > d+1$, then Weierstrass' M-test holds and $\sum_{\ell=0}^{\infty}\sum_{k=1}^{r_\ell^d}a_k^\ell(f) P_{\ell,k}(x)$ converges uniformly on $T^d$.
\end{proof}

Using the result of Lemma \ref{uniform}, it easy to see the following embedding result for the Sobolev space $H_1$.
\begin{lemma}
\label{embedding}
If $r > d+1$, $H_1 \subseteq \mathcal{C}(T^d)$, where $\mathcal{C}(T^d)$ is the class of continuous functions on $T^d$.
\end{lemma}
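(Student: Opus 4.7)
The proof is essentially a direct corollary of Lemma \ref{uniform}, so the plan is short. Given $f \in H_1$, I would write its Fourier expansion
\[
f(x) \sim \sum_{\ell=0}^\infty \sum_{k=1}^{r_\ell^d} a_k^\ell(f)\, P_{\ell,k}(x),
\]
and denote by $S_N(x) := \sum_{\ell=0}^{N} \sum_{k=1}^{r_\ell^d} a_k^\ell(f)\, P_{\ell,k}(x)$ its partial sums. Each $S_N$ is a polynomial on $T^d$, hence an element of $\mathcal{C}(T^d)$.

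By Lemma \ref{uniform}, since $r > d+1$, the sequence $S_N$ converges uniformly on $T^d$ to some function $g$. A uniform limit of continuous functions on the compact set $T^d$ is continuous, so $g \in \mathcal{C}(T^d)$. It remains to identify $g$ with $f$. Uniform convergence on the bounded domain $T^d$ implies $L_2(T^d)$-convergence of $S_N$ to $g$. On the other hand, by Parseval applied with the weights $B_{d,r,\gamma}(\ell)$ in the definition of the $H_1$-norm, the Fourier series converges to $f$ in the $H_1$-norm, and the $H_1$-norm dominates the $L_2$-norm up to a constant depending on $\gamma$ and the eigenvalues $[\ell(\ell+d)]^r$; in particular, $S_N \to f$ in $L_2(T^d)$. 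By uniqueness of $L_2$ limits, $f = g$ almost everywhere on $T^d$, so $f$ has a continuous representative, which we identify with $f$ itself.

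The only subtlety is the last identification step: one must make sure that the element of $H_1$ (defined as a closure under the $H_1$-norm of $\Pi^d$) really is the pointwise limit of its Fourier partial sums, not merely an equivalence class agreeing with it almost everywhere. Since $H_1$ is constructed as the closure of polynomials, and the Fourier partial sums are themselves polynomials converging in the $H_1$-norm to $f$, the natural choice of representative for $f$ is precisely the uniform limit $g$, and hence $f \in \mathcal{C}(T^d)$, giving $H_1 \subseteq \mathcal{C}(T^d)$.
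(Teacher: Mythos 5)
Your proof is correct and follows essentially the same route as the paper: both deduce continuity from the uniform convergence established in Lemma \ref{uniform} together with the fact that the polynomial partial sums are continuous. Your extra care in identifying the uniform limit with $f$ (via $L_2$ convergence and uniqueness of limits) is a welcome elaboration of a step the paper passes over quickly, but it is not a different argument.
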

\begin{proof}
Let $f \in H_1$. Since $r > d+1$, by Lemma \ref{uniform} we see that the Fourier series of $f$ converges uniformly. Now, as uniform convergence preserves continuity we have that $f$ is continuous and pointwise equal to its Fourier series. Hence, the result follows.
\end{proof}

\subsection{Reproducing Kernel Hilbert Space}
We will now show that $H_1$ is indeed a reproducing kernel Hilbert space. (For more details on RKHS see \cite{aronszajn1950theory}.)  Using the embedding result of Lemma \ref{embedding}, we can prove the following result. 
\begin{lemma}
Let $r > d+1$. For $f \in H_1$, we have $|f(x)| \leq C ||f||_1$ for some $C > 0$.
\end{lemma}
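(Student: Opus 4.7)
The plan is to bootstrap directly from the estimates already assembled in the proof of Lemma \ref{uniform}. Since $r > d+1$, Lemmas \ref{uniform} and \ref{embedding} guarantee that the Fourier series of $f$ converges uniformly to $f$ on $T^d$, hence pointwise everywhere. Therefore, for any $x \in T^d$, I may bound
\[
|f(x)| \;\leq\; \sum_{\ell=0}^{\infty} |f_\ell(x)| \;\leq\; \sum_{\ell=0}^{\infty} M_\ell,
\]
where $f_\ell(x) = \sum_{k=1}^{r_\ell^d} a_k^\ell(f) P_{\ell,k}(x)$ and $M_\ell$ are the majorants constructed in the proof of Lemma \ref{uniform}.

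Next I would recall the explicit form of those majorants. For $\ell \geq 1$, the Cauchy--Schwarz and Lemma \ref{uprbound} estimates give $M_\ell = \|f\|_1 \sqrt{M}\,\ell^{d}/\sqrt{B_{d,r,\gamma}(\ell)} = \|f\|_1\sqrt{M\gamma}\,\ell^d/[\ell(\ell+d)]^{r/2}$, which is bounded above by $\|f\|_1 \sqrt{M\gamma}\,\ell^{d-r}$. For $\ell = 0$, $M_0 = |I_1(f)|$. The one additional observation I need, which was not used in Lemma \ref{uniform}, is that $I_1(f)^2 = a_1^0(f)^2$ is a summand of $\|f\|_1^2$, so $|I_1(f)| \leq \|f\|_1$.

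Combining the two estimates yields
\[
|f(x)| \;\leq\; |I_1(f)| + \|f\|_1 \sqrt{M\gamma} \sum_{\ell=1}^{\infty} \ell^{d-r}
\;\leq\; \Bigl(1 + \sqrt{M\gamma} \sum_{\ell=1}^{\infty} \ell^{d-r}\Bigr) \|f\|_1,
\]
and the series on the right converges precisely because $r > d + 1$. Setting $C := 1 + \sqrt{M\gamma} \sum_{\ell=1}^{\infty} \ell^{d-r}$ (which depends only on $d$, $r$, $\gamma$ and not on $f$ or $x$) delivers the claim. There is no real obstacle here; the inequality $|I_1(f)| \leq \|f\|_1$ is the only piece not already present in the proof of Lemma \ref{uniform}, and the rest is assembly. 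The lemma will then immediately yield, via the Riesz representation theorem, that point evaluation is a bounded linear functional on $H_1$, paving the way to exhibit $H_1$ as a reproducing kernel Hilbert space in the subsequent argument.
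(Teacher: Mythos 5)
Your proof is correct. It reaches the same conclusion as the paper by a slightly different deployment of the same tools: you apply Cauchy--Schwarz degree by degree and then sum the majorants $M_\ell$ already built in the proof of Lemma \ref{uniform} (adding the one missing observation $|I_1(f)| = |a_1^0(f)| \leq \|f\|_1$), which yields the constant $C = 1 + \sqrt{M\gamma}\sum_{\ell\ge1}\ell^{d-r}$. The paper instead applies Cauchy--Schwarz once, globally over all pairs $(\ell,k)$, which produces $|f(x)| \leq \|f\|_1 \bigl(\sum_{\ell\ge0} P_\ell(x,x)/B_{d,r,\gamma}(\ell)\bigr)^{1/2} \leq (1+\gamma c_{d,r})^{1/2}\|f\|_1$. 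The latter is the natural RKHS estimate $|f(x)| \leq \sqrt{K_1(x,x)}\,\|f\|_1$ and gives a sharper constant (your $\ell^1$-type sum of blockwise bounds dominates the paper's $\ell^2$-type quantity), but both constants depend only on $d$, $r$, $\gamma$, and both require exactly $r > d+1$, so either suffices for the boundedness of point evaluation. One small wording quibble: the lemma itself establishes that point evaluation is bounded, and it is then the Riesz representation theorem that supplies the reproducing kernel, not the other way around as your closing sentence suggests.
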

\begin{proof}
Since $f \in H_1 \subseteq \mathcal{C}(T^d)$ we get
\[\begin{split}
|f(x)| &= \left|\sum_{\ell=0}^{\infty}\sum_{k=1}^{r_\ell^d}a_k^\ell(f) P_{\ell,k}(x)\right|\\
&= \left|\sum_{\ell=0}^{\infty}\sum_{k=1}^{r_\ell^d}a_k^\ell(f) B_{d,r,\gamma}(\ell) ^{1/2} P_{\ell,k}(x) B_{d,r,\gamma}(\ell) ^{-1/2}\right|\\
&\leq \left(\sum_{\ell=0}^{\infty}\sum_{k=1}^{r_\ell^d}a_k^\ell(f)^2 B_{d,r,\gamma}(\ell)\right)^{1/2}\left(\sum_{\ell=0}^{\infty}\sum_{k=1}^{r_\ell^d}\frac{P_{\ell,k}(x)P_{\ell,k}(x)}{B_{d,r,\gamma}(\ell)}\right)^{1/2}\\
&= ||f||_1 \left(\sum_{\ell=0}^{\infty} \frac{P_\ell(x,x)}{B_{d,r,\gamma}(\ell)}\right)^{1/2} = ||f||_1 \left( 1 + \gamma\sum_{\ell=1}^{\infty} \frac{P_\ell(x,x)}{[\ell(\ell+d)]^r}\right)^{1/2}. 
\end{split}
\]
Now, 
\begin{equation}
\label{cdr}
 \sum_{\ell=1}^{\infty} \frac{P_\ell(x,x)}{[\ell(\ell+d)]^r} \leq M \sum_{\ell=1}^{\infty} \frac{\ell^{2d}}{[\ell(\ell+d)]^r} =: c_{d,r},
\end{equation}
 where the inequality follows from Lemma \ref{uprbound} and the convergence holds because $r > d+1$.
Thus,
\[
|f(x)|  \leq (1 + \gamma c_{d,r})^{1/2} ||f||_1.
\]
Taking $C = (1 + \gamma c_{d,r})^{1/2}$, we have our result. 
\end{proof}

The embedding result yields that point evaluation is a bounded linear functional on $H_1$ for $r > d+1$. Therefore, $H_1$ is a reproducing kernel Hilbert space. Thus, there exists a kernel $K_1 : T^d \times T^d \rightarrow \mathbb{R}$, which has the reproducing property; namely $K_1(x,y) = K_1(y,x)$ for all $x,y \in T^d$, $K_1(x, \cdot) \in H_1$ for all $x \in T^d$, and 
\[\langle f, K_1(\cdot,y) \rangle_1 = \sum_{\ell=0}^\infty\sum_{k=1}^{r_\ell^d} a_k^\ell(f) P_{\ell,k}(y) = f(y) \qquad \forall \; f \in H_1, y \in T^d.
\]
It is easy to see that the kernel $K_1(x,y)$ can be explicitly written as
\[\begin{split}
K_1(x,y) = K_{1,\gamma}^{(d,r)}(x,y) &= \sum_{\ell=0}^{\infty}\sum_{k=1}^{r_\ell^d}\frac{P_{\ell,k}(x)P_{\ell,k}(y)}{B_{d,r,\gamma}(\ell)} = 1 + \gamma \sum_{\ell=1}^{\infty} \frac{P_\ell(x,y)}{[\ell(\ell+d)]^r}.
\end{split}
\]
Note further that $\sum_{\ell=1}^{\infty} \frac{P_\ell(x,y)}{[\ell(\ell+d)]^r}$ converges uniformly for $r > d+1$. In fact from Lemma \ref{lemma_lower_bound} (proved later) we can show it converges absolutely for all $x, y \in T^d$. Thus $K_1(x,y) \leq 1+ \gamma c_{d,r}$, where $c_{d,r}$ is given in (\ref{cdr}) and $K_1$ is a continuous function on $T^d \times T^d$. 

\subsection{The Sobolev Space $H_m$}
From now onwards we shall assume that $r > d+1$. We closely follow \cite{Kuo2005} when defining the weighted Sobolev space $H_m = H_{m,\gamma_m}^{(d,r)}$ over the product of simplices $(T^d)^m$ as the tensor product
\[H_m = H_{m,\gamma_m}^{(d,r)} := H_{1,\gamma_{m,1}}^{(d,r)} \otimes H_{1,\gamma_{m,2}}^{(d,r)} \otimes \cdots \otimes H_{1,\gamma_{m,m}}^{(d,r)},
\]
where the weights in $\gamma_m = (\gamma_{m,1}, \ldots, \gamma_{m,m})$ are assumed to be positive and uniformly bounded, i.e., 
\begin{align}
\label{gamma_star}
 \gamma^* := \sup_{m \geq 1} \max_{1 \leq j \leq m} \gamma_{m,j} < \infty.
\end{align}
Any function $f \in H_m$ can be expressed as
\[f(\bsx) = f(x_1, \ldots, x_m) = \sum_{\bsl \in \mathbb{N}_0^m} \sum_{\bsk \in \mathcal{K}(m,\bsl)} a_{\bsk}^{\bsl}(f) \prod_{j=1}^m P_{\ell_j,k_j}(x_j),
\]
where $ \mathcal{K}(m,\bsl) := \{ \bsk \in \mathbb{N}^m : 1\leq k_j \leq r_{\ell_j}^d$ for each $j = 1, 2, \ldots, m\}$ and
\[
a_{\bsk}^{\bsl}(f) := (d!)^m \int_{(T^d)^m} f(\bsx) \prod_{j=1}^m P_{\ell_j,k_j}(x_j) \df \bsx.
\]
Similar to the space $H_1$, we can define the inner product on $H_m$ as
\[\langle f,g \rangle_m := \sum_{\bsl \in \mathbb{N}_0^m} \sum_{\bsk \in \mathcal{K}(m,\bsl)}\left(\prod_{j=1}^m B_{d,r,\gamma_{m,j}}(\ell_j) a_{\bsk}^{\bsl}(f) a_{\bsk}^{\bsl}(g)\right)
\]
and the reproducing kernel as
\[
\begin{split}
K_m(\bsx,\bsy) &:= \prod_{j=1}^m K_{1,\gamma_{m,j}}^{(d,r)}(x_j, y_j) = \prod_{j=1}^m \left( 1 + \gamma_{m,j} \sum_{\ell=1}^{\infty} \frac{P_\ell(x_j,y_j)}{[\ell(\ell+d)]^r} \right).
\end{split}
\]
\section{QMC tractability}
Kuo and Sloan \cite{Kuo2005} gave expressions for the worst-case error in reproducing kernel Hilbert spaces on products of spheres, in terms of the reproducing kernel. Here we state analogous results considering the domain to be products of simplices. The worst-case error using the QMC rule with points $(\bst_1, \ldots, \bst_n)$ is 
\[
\begin{split}
e_{n,m}^2 &= \sup_{f \in H_m, ||f||_m \leq 1} |I_m(f) - Q_{n,m}(f)|^2\\
&= (d!)^{2m} \int_{(T^d)^m}\int_{(T^d)^m} K_m(\bsx,\bsy) \df \bsx \df \bsy - \frac{2(d!)^m}{n} \sum_{i=1}^n \int_{(T^d)^m} K_m(\bsx,\bst_i) \df \bsx\\
& \;\;\;\; + \frac{1}{n^2} \sum_{i=1}^n \sum_{h=1}^n K_m(\bst_i,\bst_h).
\end{split}
\]
The initial error satisfies
\[e_{0,m}^2 = (d!)^{2m} \int_{(T^d)^m}\int_{(T^d)^m} K_m(\bsx,\bsy) \df \bsx \df \bsy.
\]
\label{sec4}
The mean square worst-case error over all cubature points is
\[ \begin{split}
\mathbb{E}(e_{n,m}^2) &:= (d!)^{nm} \int_{(T^d)m}\cdots \int_{(T^d)^m} e_{n,m}^2 (\bst_1, \ldots, \bst_n) \df \bst_1 \cdots \df \bst_n\\
&= \frac{1}{n} \left((d!)^m \int_{(T^d)^m} K_m(\bsx,\bsx) \df \bsx  - (d!)^{2m} \int_{(T^d)^m}\int_{(T^d)^m} K_m(\bsx,\bsy) \df \bsx \df \bsy
 \right).
\end{split}
\]
We now focus on our Sobolev space $H_m$ and give an upper and lower bound on $e_{n,m}^2$ which will then be used to prove QMC tractability in the space $H_m$. We begin with a lemma which is similar to Lemma 1 in \cite{Kuo2005}.

\begin{lemma}
\label{lemmaenm}
Let $r > d+1$. Then $e_{0,m} = 1$ and
\[e_{n,m}^2 = -1 +  \frac{1}{n^2} \sum_{i=1}^n \sum_{h=1}^n \prod_{j=1}^m \left( 1 + \gamma_{m,j} \sum_{\ell=1}^{\infty} \frac{P_\ell(t_{i,j},t_{h,j})}{[\ell(\ell+d)]^r}\right).
\] 
\end{lemma}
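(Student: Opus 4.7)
The identities follow from the explicit formulas for $e_{0,m}^2$, the initial error, and $e_{n,m}^2$ in terms of the reproducing kernel $K_m$ that were displayed just before the lemma, combined with two facts: the tensor-product structure $K_m(\bsx,\bsy)=\prod_{j=1}^m K_{1,\gamma_{m,j}}^{(d,r)}(x_j,y_j)$, and the single-simplex orthogonality $\int_{T^d} P_\ell(x,y)\,\df y = 0$ for all $\ell\ge 1$ proved in Lemma \ref{ortho}.

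\noindent\textbf{Step 1: Compute the integral of a single factor.} I would first show that for any fixed $y\in T^d$,
\[
d!\int_{T^d}\Bigl(1+\gamma_{m,j}\sum_{\ell=1}^{\infty}\frac{P_\ell(x,y)}{[\ell(\ell+d)]^r}\Bigr)\df x \;=\; 1.
\]
The constant term contributes $d!\cdot\mathrm{vol}(T^d)=d!\cdot(1/d!)=1$, and the series term vanishes: for $r>d+1$, Lemma \ref{uprbound} together with $\sum_\ell \ell^{2d}/[\ell(\ell+d)]^r<\infty$ justifies interchanging $\int_{T^d}$ and $\sum_{\ell\ge 1}$ (Weierstrass $M$-test or dominated convergence), after which each term is killed by Lemma \ref{ortho}. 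A symmetric statement holds integrating in $y$ with $x$ fixed, and, by iterating, integrating both variables.

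\noindent\textbf{Step 2: Initial error.} I apply Step 1 twice (once in $x_j$ then once in $y_j$) to each factor of the product kernel $K_m$. The outer constant $(d!)^{2m}$ distributes over the $m$ factors, giving
\[
e_{0,m}^2 \;=\; (d!)^{2m}\int_{(T^d)^m}\!\!\int_{(T^d)^m} K_m(\bsx,\bsy)\,\df\bsx\,\df\bsy \;=\; \prod_{j=1}^m 1 \;=\; 1,
\]
so $e_{0,m}=1$ as claimed.

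\noindent\textbf{Step 3: Middle (cross) term.} For each cubature point $\bst_i$, the integral $(d!)^m\int_{(T^d)^m}K_m(\bsx,\bst_i)\,\df\bsx$ factors into $m$ one-dimensional integrals, each equal to $1$ by Step 1 (taking $y=t_{i,j}$). Hence $\frac{2(d!)^m}{n}\sum_{i=1}^n\int_{(T^d)^m}K_m(\bsx,\bst_i)\,\df\bsx = \frac{2}{n}\cdot n = 2$.

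\noindent\textbf{Step 4: Assembly.} The third term in the kernel expression for $e_{n,m}^2$ is literally $\frac{1}{n^2}\sum_{i,h}K_m(\bst_i,\bst_h)$, and substituting the product form of $K_m$ gives exactly the displayed double sum in the statement. Combining Steps 2--4 with the formula for $e_{n,m}^2$ yields $e_{n,m}^2 = 1 - 2 + \frac{1}{n^2}\sum_{i,h}K_m(\bst_i,\bst_h) = -1 + \frac{1}{n^2}\sum_{i,h}K_m(\bst_i,\bst_h)$.

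\noindent\textbf{Main obstacle.} The only nonroutine point is rigorously justifying the term-by-term integration of the infinite series defining $K_1$ against $\df x$ over $T^d$; everything else is bookkeeping with tensor products. This is handled by the $r>d+1$ hypothesis through the uniform bound $P_\ell(x,y)\le M\ell^{2d}$ of Lemma \ref{uprbound}, which makes the series absolutely and uniformly convergent on $T^d\times T^d$ and so permits integration under the summation sign.
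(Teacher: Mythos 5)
Your proposal is correct and follows essentially the same route as the paper: factor the tensor-product kernel, integrate term by term (justified by the uniform bound of Lemma \ref{uprbound} and $r>d+1$), kill each $\ell\ge 1$ term via Lemma \ref{ortho}, and assemble the three terms of the worst-case error formula into $1-2+\frac{1}{n^2}\sum_{i,h}K_m(\bst_i,\bst_h)$. The only cosmetic difference is that the paper invokes the bounded convergence theorem where you cite the Weierstrass $M$-test/dominated convergence; both rest on the same bound.
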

\begin{proof}
Observe that
\[\begin{split}
(d!)^m \int_{(T^d)^m} K_m(\bsx,\bsy) \df \bsx &= (d!)^m \int_{(T^d)^m}  \prod_{j=1}^m \left( 1 + \gamma_{m,j} \sum_{\ell=1}^{\infty} \frac{P_\ell(x_j,y_j)}{[\ell(\ell+d)]^r}\right) \df \bsx\\
&=\prod_{j=1}^m d! \int_{T^d}\left( 1 + \gamma_{m,j} \sum_{\ell=1}^{\infty} \frac{P_\ell(x_j,y_j)}{[\ell(\ell+d)]^r}\right) \df x_j\\
&=\prod_{j=1}^m \left( 1 + \gamma_{m,j} d! \int_{T^d}\sum_{\ell=1}^{\infty} \frac{P_\ell(x_j,y_j)}{[\ell(\ell+d)]^r}\df x_j\right). 
\end{split}
\]
Now, since $\sum_{\ell=1}^{\infty} \frac{P_\ell(x_j,y_j)}{[\ell(\ell+d)]^r}$ is uniformly bounded, by the bounded convergence theorem, we can interchange the sum and the integral. Thus, we get
\[
\begin{split}
(d!)^m \int_{(T^d)^m} K_m(\bsx,\bsy) \df \bsx &= \prod_{j=1}^m \left( 1 + \gamma_{m,j} \sum_{\ell=1}^{\infty} \frac{d! \int_{T^d}P_\ell(x_j,y_j)\df x_j}{[\ell(\ell+d)]^r}\right) \\
&= 1,
\end{split}
\]
where the last equality follows from Lemma \ref{ortho}. Thus, we get
\[\begin{split}
e_{0,m}^2 &= (d!)^{2m} \int_{(T^d)^m}\int_{(T^d)^m} K_m(\bsx,\bsy) \df \bsx \df \bsy\\
&= (d!)^m \int_{(T^d)^m} 1 \df \bsy = 1.
\end{split}
\]
We also get
\[
\begin{split}
e_{n,m}^2 
&= (d!)^{2m} \int_{(T^d)^m}\int_{(T^d)^m} K_m(\bsx,\bsy) \df \bsx \df \bsy - \frac{2(d!)^m}{n} \sum_{i=1}^n \int_{(T^d)^m} K_m(\bsx,\bst_i) \df \bsx\\
& \;\;\;\; + \frac{1}{n^2} \sum_{i=1}^n \sum_{h=1}^n K_m(\bst_i,\bst_h)\\
& = 1 - 2 + \frac{1}{n^2} \sum_{i=1}^n \sum_{h=1}^n K_m(\bst_i,\bst_h)\\
& = -1 +  \frac{1}{n^2} \sum_{i=1}^n \sum_{h=1}^n \prod_{j=1}^m \left( 1 + \gamma_{m,j} \sum_{\ell=1}^{\infty} \frac{P_\ell(t_{i,j},t_{h,j})}{[\ell(\ell+d)]^r}\right).
\end{split}
\]
\end{proof}
\subsection{Upper Bound for $e_{n,m}^2$}
We use the expected worst-case error to obtain a particular set of $\bst_1, \ldots, \bst_n \in (T^d)^m$ such that $e_{n,m}^2(\bst_1, \ldots, \bst_n)$ has the required upper bound. The following lemma gives the result. It is of the same flavor as Lemma 2 in \cite{Kuo2005}.
\begin{lemma}
\label{upperbound}
Let $r > d+1$. There exist $\bst_1, \ldots, \bst_n \in (T^d)^m$ such that
\[e_{n,m}^2(\bst_1, \ldots, \bst_n) \leq \frac{1}{n} \left( \prod_{j=1}^m \left( 1 + \gamma_{m,j}c_{d,r}\right) -1 \right),
\]
where $c_{d,r}$ is given in (\ref{cdr}).
\end{lemma}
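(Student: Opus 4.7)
The plan is to invoke the standard averaging argument: bound the mean square worst-case error $\mathbb{E}(e_{n,m}^2)$ taken over uniformly chosen cubature points $\bst_1,\ldots,\bst_n \in (T^d)^m$, and then note that some particular choice of points must do at least as well as the average. Concretely, using the formula for $\mathbb{E}(e_{n,m}^2)$ stated just before Lemma \ref{lemmaenm}, and the fact already established in Lemma \ref{lemmaenm} that $(d!)^{2m}\int\!\!\int K_m(\bsx,\bsy)\df\bsx\df\bsy = 1$, the quantity to estimate reduces to
\[
\mathbb{E}(e_{n,m}^2) = \frac{1}{n}\left( (d!)^m \int_{(T^d)^m} K_m(\bsx,\bsx)\,\df\bsx \;-\; 1 \right).
\]

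Next, I would exploit the tensor-product form of the reproducing kernel. Since $K_m(\bsx,\bsx) = \prod_{j=1}^m K_{1,\gamma_{m,j}}^{(d,r)}(x_j,x_j)$, the diagonal integral factorizes as
\[
(d!)^m \int_{(T^d)^m} K_m(\bsx,\bsx)\,\df\bsx = \prod_{j=1}^m d!\int_{T^d}\left(1+\gamma_{m,j}\sum_{\ell=1}^\infty \frac{P_\ell(x_j,x_j)}{[\ell(\ell+d)]^r}\right)\df x_j.
\]
For each factor, I would use Lemma \ref{uprbound} to bound $P_\ell(x,x)\leq M\ell^{2d}$, which gives $\sum_{\ell=1}^\infty P_\ell(x,x)/[\ell(\ell+d)]^r \leq c_{d,r}$ uniformly in $x\in T^d$ (this series converges since $r>d+1$, as in (\ref{cdr})). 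Combined with $d!\int_{T^d}1\,\df x = 1$, each one-dimensional factor is at most $1+\gamma_{m,j}c_{d,r}$, so the whole product is at most $\prod_{j=1}^m (1+\gamma_{m,j}c_{d,r})$.

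Substituting back yields
\[
\mathbb{E}(e_{n,m}^2) \leq \frac{1}{n}\left(\prod_{j=1}^m (1+\gamma_{m,j}c_{d,r}) - 1\right).
\]
Since this is an average of $e_{n,m}^2(\bst_1,\ldots,\bst_n)$ over all choices of points in $(T^d)^{nm}$, there must exist at least one deterministic choice of $\bst_1,\ldots,\bst_n$ for which the value does not exceed the expectation, giving the claimed bound.

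There is no real obstacle here; the argument is a routine mean-square/existence argument of Kuo--Sloan type. The only points requiring care are the interchange of sum and integral inside each factor (justified by the uniform convergence of $\sum_\ell P_\ell(x,y)/[\ell(\ell+d)]^r$ under $r>d+1$, which in turn rests on Lemma \ref{uprbound}) and the elimination of the cross term $\int K_m(\bsx,\bst_i)\df\bsx$, which by the same factorization and Lemma \ref{ortho} equals $1/(d!)^m$ just as in the proof of Lemma \ref{lemmaenm}.
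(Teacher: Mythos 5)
Your proposal is correct and follows essentially the same route as the paper: the averaging (mean-square) argument, the factorization of $(d!)^m\int_{(T^d)^m}K_m(\bsx,\bsx)\,\df\bsx$ into one-dimensional factors, and the bound $\sum_{\ell\ge 1}P_\ell(x,x)/[\ell(\ell+d)]^r\le c_{d,r}$ via Lemma \ref{uprbound}. The additional remarks on the sum--integral interchange and the cross term match what the paper already establishes in Lemma \ref{lemmaenm}.
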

\begin{proof}
Note that $\mathbb{E}(e_{n,m}^2)$ is the mean of $e_{n,m}^2$ over all possible selection of cubature points. Thus, there exists a collection $(\bst_1, \ldots, \bst_n)$ such that $e_{n,m}^2(\bst_1, \ldots, \bst_n) \leq \mathbb{E}(e_{n,m}^2)$. Now, 
\[\begin{split}
 \mathbb{E}(e_{n,m}^2) &= \frac{1}{n} \left((d!)^m \int_{(T^d)^m} K_m(\bsx,\bsx) \df \bsx  - (d!)^{2m} \int_{(T^d)^m}\int_{(T^d)^m} K_m(\bsx,\bsy) \df \bsx \df \bsy\right)\\
&= \frac{1}{n}\left((d!)^m \int_{(T^d)^m} K_m(\bsx,\bsx) \df \bsx  - 1\right)\\
&= \frac{1}{n}\left( \prod_{j=1}^m \left( 1 + \gamma_{m,j} \sum_{\ell=1}^{\infty} \frac{d! \int_{T^d}P_\ell(x_j,x_j)\df x_j}{[\ell(\ell+d)]^r}\right) -1\right).
\end{split}
\]
Note that
\[
\sum_{\ell=1}^{\infty} \frac{d! \int_{T^d}P_\ell(x_j,x_j)\df x_j}{[\ell(\ell+d)]^r} \leq \sum_{\ell=1}^{\infty} \frac{d! \int_{T^d}M\ell^{2d}\df x_j}{[\ell(\ell+d)]^r} = M \sum_{\ell=1}^{\infty} \frac{\ell^{2d}}{[\ell(\ell+d)]^r} = c_{d,r},
\]
where the first inequality follows from Lemma \ref{uprbound}. Furthermore, for all $j \in \{1,\ldots,m\}$, $
P_\ell(x_j, x_j) = \sum_{k=1}^{r_\ell^d} (P_{\ell,k}(x_j))^2 \ge 0$. Thus, we get for all $j \in \{1,\ldots,m\}$
\[
1 \leq \left( 1 + \gamma_{m,j} \sum_{\ell=1}^{\infty} \frac{d! \int_{T^d}P_\ell(x_j,x_j)\df x_j}{[\ell(\ell+d)]^r}\right) \leq  1 + \gamma_{m,j} c_{d,r}.
\]
Plugging it back into $\mathbb{E}(e_{n,m}^2)$, we see that there exist $\bst_1, \ldots, \bst_n$ such that
\[e_{n,m}^2(\bst_1, \ldots, \bst_n) \leq \mathbb{E}(e_{n,m}^2) \leq  \frac{1}{n} \left( \prod_{j=1}^m \left( 1 + \gamma_{m,j}c_{d,r}\right) -1 \right).
\]
\end{proof}

\subsection{Lower Bound on $e_{n,m}^2$}
We begin this section with a few notation. We add an extra subscript on the previous notation to identify the weights with respect to which the Sobolev space is defined. Let $|| \cdot ||_{m,\boldsymbol{\gamma}_m}$, $e_{n,m,\boldsymbol{\gamma}_m}$ and $K_{m,\boldsymbol{\gamma}}(\cdot, \cdot)$ denote the norm, worst-case error and the kernel, respectively, in the space $H_{m,\boldsymbol{\gamma}_{m}}^{(d,r)}$.

We follow the argument in \cite{Kuo2005, Sloan2001} to obtain our lower bound. The argument would have been much simpler as in \cite{Sloan1998}, if $K_{m,\boldsymbol{\gamma}}(\bsx,\bsy) \geq 0$. However that may not always be the case, so we introduce $\boldsymbol{\eta}_{m} = (\eta_{m,1}, \ldots, \eta_{m,m}),$ a collection of positive weights such that $\eta_{m,j} \leq \gamma_{m,j}$ for all $j = 1, \ldots, m$ and later choose $\boldsymbol{\eta}$ such that $K_{m, \boldsymbol{\eta}}(\bsx,\bsy)$ is non-negative.   

Observe, that as $\eta_{m,j} \leq \gamma_{m,j}$ for all $j = 1, \ldots, m$, we have $B_{d,r,\gamma_{m,j}}(\ell) \leq B_{d,r,\eta_{m,j}}(\ell)$ for all $j = 1, \ldots, m$ which shows that $||f||_{m,\boldsymbol{\gamma}_m} \leq ||f||_{m,\boldsymbol{\eta}_m}$. This implies that the unit ball of $H_{m,\boldsymbol{\eta}_{m}}^{(d,r)}$ is contained in the unit ball of $H_{m,\boldsymbol{\gamma}_{m}}^{(d,r)}$. Thus, from Definition (\ref{defenm}), we get 
\[e_{n,m,\boldsymbol{\eta}_m}(\bst_1, \ldots, \bst_n) \leq e_{n,m,\boldsymbol{\gamma}_m}(\bst_1, \ldots, \bst_n).
\]
Thus, it is enough to obtain a lower bound for $e_{n,m,\boldsymbol{\eta}_m}$ to get  a lower bound for $e_{n,m,\boldsymbol{\gamma}_m}$. To do that we first need to find the appropriate weights $\boldsymbol{\eta}_m$.
For $x,y \in T^d$, consider the continuous function
\begin{equation}
\label{geqn}
 g(x,y) = \sum_{\ell=1}^{\infty} \frac{P_\ell(x,y)}{[\ell(\ell+d)]^r}.
\end{equation}
Let $g_{\min}$ and $g_{\max}$ denote the minimum and maximum of $g(x,y)$. Note that by Lemma \ref{uprbound}, $g_{\max}$ is finite because $r > d+1$ by assumption. We shall also show that $g_{\min} > -\infty$. In fact we produce an explicit lower bound for $g_{\min}$.
\begin{lemma}
\label{lemma_lower_bound}
Let $r > d+1$. For $x, y \in T^d$, let $g(x,y)$ denote the function in (\ref{geqn}). Then,
\[
g_{\min} =  \inf_{x,y \in T^d} g(x,y)  \geq -c_{d,r},
\]
where $c_{d,r}$ is given in (\ref{cdr}).
\end{lemma}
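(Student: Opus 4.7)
The plan is to establish an absolute-value analogue of Lemma \ref{uprbound} and then apply the triangle inequality termwise.

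First I would observe that the proof of Lemma \ref{uprbound} actually yields the stronger bound $|P_\ell(x,y)| \le M\ell^{2d}$. Indeed, starting from the integral representation
\[
P_\ell(x,y) = \frac{2\ell + d}{(2\pi)^{d+1}d} \int_{[-1,1]^{d+1}} C_{2\ell}^{(d)}\!\left(\sum_{i=1}^{d+1} \sqrt{x_iy_i}\,t_i\right) \prod_{i=1}^{d+1}(1 - t_i^2)^{-\frac{1}{2}} \df \bst,
\]
the weight $\prod_i (1 - t_i^2)^{-1/2}$ is nonnegative, so pulling the absolute value inside and using $|C_{2\ell}^{(d)}(u)| \le C_{2\ell}^{(d)}(1)$ on $[-1,1]$ (the first property in (\ref{genen_prop})) reproduces exactly the chain of inequalities in the proof of Lemma \ref{uprbound}, yielding $|P_\ell(x,y)| \le M\ell^{2d}$ with the same constant $M$ depending only on $d$.

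Next I would apply the triangle inequality to the series defining $g(x,y)$. Since $r > d+1$, the series converges absolutely and uniformly on $T^d \times T^d$ (the convergence of $\sum \ell^{2d}/[\ell(\ell+d)]^r$ is what defines $c_{d,r}$ in (\ref{cdr})), so there is no issue with rearrangement. Then for every $x,y \in T^d$,
\[
g(x,y) \;\ge\; -\sum_{\ell=1}^{\infty} \frac{|P_\ell(x,y)|}{[\ell(\ell+d)]^r} \;\ge\; -M\sum_{\ell=1}^{\infty} \frac{\ell^{2d}}{[\ell(\ell+d)]^r} \;=\; -c_{d,r}.
\]
Taking the infimum over $x,y \in T^d$ gives the claim.

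There is essentially no obstacle here: the only subtlety is noticing that the proof of Lemma \ref{uprbound} already produces an absolute-value bound, because the Gegenbauer factor is the only sign-changing quantity in the integrand and it obeys $|C_{2\ell}^{(d)}(u)| \le C_{2\ell}^{(d)}(1)$. Once this is observed, absolute convergence of $g$ follows for free, which also justifies the claim made just before the lemma that $\sum_{\ell=1}^{\infty} P_\ell(x,y)/[\ell(\ell+d)]^r$ converges absolutely for all $x,y \in T^d$.
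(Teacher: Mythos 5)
Your proposal is correct and is essentially the paper's own argument: the paper lower-bounds $P_\ell(x,y)$ by $-\frac{2\ell+d}{2^{d+1}d}C_{2\ell}^{(d)}(1)$ via $\min_{u\in[-1,1]}C_{2\ell}^{(d)}(u)\ge -C_{2\ell}^{(d)}(1)$ and then sums, which is the same chain of inequalities you obtain by phrasing it as $|P_\ell(x,y)|\le M\ell^{2d}$ plus the triangle inequality. The two formulations rely on the identical Gegenbauer bound from (\ref{genen_prop}) and the nonnegativity of the Chebyshev weight, so no substantive difference.
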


\begin{proof}
From (\ref{pneqn}), we have
\[\begin{split}
P_\ell(x,y) &= \frac{2\ell + d}{(2\pi)^{d+1}d} \times 
\int_{[-1,1]^{d+1}} C_{2\ell}^{(d)}\left(\sum_{i=1}^{d+1} \sqrt{x_iy_i}t_i\right) \prod_{i=1}^{d+1}(1 - t_i^2)^{ -\frac{1}{2}} \df \bst\\
& \geq \frac{2\ell + d}{2^{d+1}d} \min_{u \in [-1,1]} C_{2\ell}^{(d)}(u).
\end{split}
\]
Now, from \cite{dunkl2001orthogonal} we know that the Gegenbauer polynomials satisfy $|C_\ell^{(\lambda)}(u)| \leq C_\ell^{(\lambda)}(1)$ for all $ u \in [-1,1]$. Therefore, $
\min_{u \in [-1,1]} C_{2\ell}^{(d)}(u) \geq -C_{2\ell}^{(d)}(1)$. From this we get,
\[\begin{split}
g_{\min} &= \inf_{x,y \in T^d} \sum_{\ell=1}^{\infty} \frac{P_\ell(x,y)}{[\ell(\ell+d)]^r} \geq \sum_{\ell=1}^{\infty} \frac{2\ell + d}{2^{d+1}d(\ell(\ell+d))^r} \min_{u \in [-1,1]} C_{2\ell}^{(d)}(u)\\
& \geq -\sum_{\ell=1}^{\infty} \frac{2\ell + d}{2^{d+1}d(\ell(\ell+d))^r} C_{2\ell}^{(d)}(1)\\
& \geq -M\sum_{\ell=1}^{\infty} \frac{\ell^{2d}}{[\ell(\ell+d)]^r} = -c_{d,r} > -\infty,
\end{split}
\]
where the third inequality follows from the proof of Lemma \ref{uprbound}.
\end{proof}

Thus, we get $-c_{d,r} \leq g_{\min} \leq g_{\max} \leq c_{d,r}$. Now, we define
\begin{equation}
\label{bdr}
b_{d,r} := \min\left(1, \frac{1}{\gamma^* |g_{min}|}\right),
\end{equation}
where $\gamma^*$ is given in (\ref{gamma_star}) and set $\eta_{m,j} = b_{d,r} \gamma_{m,j}$ for each $j = 1, \ldots, m$. With this definition it is easy to show that $\boldsymbol{\eta}_m$ has all desired properties. The non-negativity of the kernel is proved as follows.
\begin{lemma}
\label{kpos}
Let $\boldsymbol{\eta}_m = b_{d,r}\boldsymbol{\gamma}_m$, where $b_{d,r}$ is defined in (\ref{bdr}). Then $K_{m, \boldsymbol{\eta}}(\bsx,\bsy) \geq 0$.
\end{lemma}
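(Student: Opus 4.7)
The plan is to exploit the product structure of $K_{m,\boldsymbol{\eta}}$ and reduce the global non-negativity to a one-dimensional statement about each factor. Recalling the explicit form
\[
K_{m,\boldsymbol{\eta}}(\bsx,\bsy) = \prod_{j=1}^m \left(1 + \eta_{m,j}\,g(x_j,y_j)\right),
\]
where $g(x_j,y_j) = \sum_{\ell \geq 1} P_\ell(x_j,y_j)/[\ell(\ell+d)]^r$ is the function from (\ref{geqn}), it suffices to show that every factor $1 + \eta_{m,j}\,g(x_j,y_j)$ is non-negative. I would then handle each factor by splitting on the sign of $g(x_j,y_j)$.

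If $g(x_j,y_j) \geq 0$, the factor is at least $1$ because $\eta_{m,j} > 0$, and there is nothing to prove. If $g(x_j,y_j) < 0$, the bound $g(x_j,y_j) \geq g_{\min}$ from the preceding lemma, combined with $\gamma_{m,j} \leq \gamma^*$, gives
\[
\eta_{m,j}\,|g(x_j,y_j)| = b_{d,r}\,\gamma_{m,j}\,|g(x_j,y_j)| \leq b_{d,r}\,\gamma^*\,|g_{\min}|.
\]
Now I would invoke the definition (\ref{bdr}) of $b_{d,r}$. Since $b_{d,r} \leq 1/(\gamma^*|g_{\min}|)$ (when $g_{\min} < 0$, otherwise there is nothing to check because $g \geq 0$ everywhere), the right-hand side is at most $1$, so $1 + \eta_{m,j}\,g(x_j,y_j) \geq 0$. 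Multiplying the $m$ non-negative factors yields $K_{m,\boldsymbol{\eta}}(\bsx,\bsy) \geq 0$.

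There is no real obstacle here; the only subtlety is handling the degenerate case $g_{\min} \geq 0$, where the expression $1/(\gamma^*|g_{\min}|)$ in the definition of $b_{d,r}$ is vacuous or infinite, and in that case every factor is automatically at least $1$ regardless of the choice of $b_{d,r} \in (0,1]$. Once this case is dismissed, the argument is a direct verification using the lemma on the lower bound for $g_{\min}$ together with the uniform weight bound $\gamma^*$.
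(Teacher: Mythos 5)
Your proof is correct and follows essentially the same route as the paper: reduce to non-negativity of each factor $1+\eta_{m,j}\,g(x_j,y_j)$, bound $g$ below by $g_{\min}$, and use $\gamma_{m,j}\le\gamma^*$ together with the definition of $b_{d,r}$. The explicit case split on the sign of $g(x_j,y_j)$ and the remark about the degenerate case $g_{\min}\ge 0$ are harmless refinements of the paper's single chain of inequalities.
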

\begin{proof}
Note that for each $j \in \{1, \ldots, m\}$
\[
\begin{split}
1 + \eta_{m,j} g(x_j,y_j)  &= 1 + b_{d,r}\gamma_{m,j} g(x_j,y_j) \\
& \geq 1 + b_{d,r}\gamma_{m,j} g_{\min} \geq 1 - b_{d,r}\gamma_{m,j} |g_{\min}| \\
&\geq  1 - b_{d,r}\gamma^*|g_{\min}| \geq 0,
\end{split}
\]
where the last inequality follows from (\ref{bdr}). Thus, we get
\[
K_{m, \boldsymbol{\eta}}(\bsx,\bsy) = \prod_{j=1}^m \left( 1 + \eta_{m,j} g(x_j,y_j) \right)  \geq 0
\]
\end{proof}

Now we are at the stage to prove the lower bound for $e_{n,m}$. Analogous to (\ref{geqn}), for $x \in T^d$ we define
\begin{equation}
\label{geqn2}
\tilde{g}(x) :=  \sum_{\ell=1}^{\infty} \frac{P_\ell(x,x)}{[\ell(\ell+d)]^r}.
\end{equation}
Let  $\tilde{g}_{\min} = \inf_{x \in T^d} \tilde{g}(x)$. Since $P_\ell(x,x) \geq 0$ for all $x \in T^d$, we have $\tilde{g}_{\min} \geq 0$. Using $\tilde{g}_{\min}$, we give the lower bound for $e_{n,m}$ in the following lemma. It is similar to Lemma 3 in \cite{Kuo2005}.

\begin{lemma}
\label{lowerbound}
Let $r > d+1$. For all $(\bst_1, \ldots, \bst_n) \in (T^d)^m$, we have
\[e_{n,m}^2(\bst_1, \ldots, \bst_n) \geq -1 + \frac{1}{n} \prod_{j=1}^m \left(1 + b_{d,r}\tilde{g}_{\min}\gamma_{m,j}\right),
\]
where $b_{d,r}$ is given in (\ref{bdr}) and $\tilde{g}_{\min}$ is the minimum of the function $\tilde{g}$ given in (\ref{geqn2}).
\end{lemma}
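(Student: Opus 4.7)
The plan is to exploit the non-negativity of the kernel $K_{m,\boldsymbol{\eta}}$ (Lemma \ref{kpos}) to discard off-diagonal contributions in the double sum expression for $e_{n,m}^2$, and then minimize each surviving diagonal factor via $\tilde g_{\min}$.

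First, I would invoke the monotonicity already observed just before (\ref{bdr}): since $\eta_{m,j}=b_{d,r}\gamma_{m,j}\le\gamma_{m,j}$, the unit ball of $H_{m,\boldsymbol{\eta}_m}^{(d,r)}$ sits inside the unit ball of $H_{m,\boldsymbol{\gamma}_m}^{(d,r)}$, hence
\[
e_{n,m,\boldsymbol{\gamma}_m}^2(\bst_1,\dots,\bst_n)\;\ge\;e_{n,m,\boldsymbol{\eta}_m}^2(\bst_1,\dots,\bst_n).
\]
So it suffices to bound the right-hand side from below.

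Next, by Lemma \ref{lemmaenm} applied with weights $\boldsymbol{\eta}_m$,
\[
e_{n,m,\boldsymbol{\eta}_m}^2(\bst_1,\dots,\bst_n)=-1+\frac{1}{n^2}\sum_{i=1}^{n}\sum_{h=1}^{n}K_{m,\boldsymbol{\eta}}(\bst_i,\bst_h).
\]
By Lemma \ref{kpos}, every term $K_{m,\boldsymbol{\eta}}(\bst_i,\bst_h)$ is $\ge 0$, so I may retain only the diagonal contributions $i=h$:
\[
\frac{1}{n^2}\sum_{i=1}^n\sum_{h=1}^n K_{m,\boldsymbol{\eta}}(\bst_i,\bst_h)\;\ge\;\frac{1}{n^2}\sum_{i=1}^{n}K_{m,\boldsymbol{\eta}}(\bst_i,\bst_i)=\frac{1}{n^2}\sum_{i=1}^{n}\prod_{j=1}^{m}\bigl(1+\eta_{m,j}\tilde g(t_{i,j})\bigr),
\]
where $\tilde g(x)=g(x,x)$ is the function defined in (\ref{geqn2}). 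Since $\eta_{m,j}>0$ and $\tilde g(x)\ge\tilde g_{\min}\ge 0$ for every $x\in T^d$, each factor satisfies $1+\eta_{m,j}\tilde g(t_{i,j})\ge 1+\eta_{m,j}\tilde g_{\min}=1+b_{d,r}\gamma_{m,j}\tilde g_{\min}$. Therefore
\[
\frac{1}{n^2}\sum_{i=1}^{n}\prod_{j=1}^{m}\bigl(1+\eta_{m,j}\tilde g(t_{i,j})\bigr)\;\ge\;\frac{1}{n}\prod_{j=1}^{m}\bigl(1+b_{d,r}\tilde g_{\min}\gamma_{m,j}\bigr).
\]
Combining this with the first two displays yields the claimed inequality.

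There is no real obstacle here: the preparatory work has already been done (verification that $\boldsymbol{\eta}_m\le\boldsymbol{\gamma}_m$, non-negativity of $K_{m,\boldsymbol{\eta}}$ in Lemma \ref{kpos}, and non-negativity of $P_\ell(x,x)$ which makes $\tilde g_{\min}\ge 0$). What would have been the hard point, controlling the potentially negative off-diagonal values of the original kernel $K_{m,\boldsymbol{\gamma}}$, has been circumvented by passing to the shrunken weights $\boldsymbol{\eta}_m$; dropping off-diagonal terms is then legal precisely because it is done for $K_{m,\boldsymbol{\eta}}$, not for $K_{m,\boldsymbol{\gamma}}$.
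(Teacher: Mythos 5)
Your proposal is correct and follows essentially the same route as the paper: pass to the shrunken weights $\boldsymbol{\eta}_m=b_{d,r}\boldsymbol{\gamma}_m$ so that $e_{n,m,\boldsymbol{\gamma}_m}\ge e_{n,m,\boldsymbol{\eta}_m}$, apply Lemma \ref{lemmaenm} with these weights, drop the off-diagonal terms using the non-negativity of $K_{m,\boldsymbol{\eta}}$ from Lemma \ref{kpos}, and bound each diagonal factor below by $1+\eta_{m,j}\tilde g_{\min}$. No gaps.
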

\begin{proof}
We have already shown that $e_{n,m} = e_{n,m,\boldsymbol{\gamma}_m}(\bst_1, \ldots, \bst_n) \geq e_{n,m,\boldsymbol{\eta}_m}(\bst_1, \ldots, \bst_n)$, where $\boldsymbol{\eta}_m = b_{d,r}\boldsymbol{\gamma}_m$. Now, from Lemmas \ref{lemmaenm} and \ref{kpos}, we have 
\[\begin{split}
e_{n,m,\boldsymbol{\eta}_m}^2(\bst_1, \ldots, \bst_n) &=-1 +  \frac{1}{n^2} \sum_{i=1}^n \sum_{h=1}^n \prod_{j=1}^m \left( 1 + \eta_{m,j} g(t_{i,j},t_{h,j})\right)\\
& \geq -1 + \frac{1}{n^2} \sum_{i=1}^n \prod_{j=1}^m \left( 1 + \eta_{m,j} g(t_{i,j},t_{i,j})\right)\\
& \geq -1 +  \frac{1}{n^2} \sum_{i=1}^n \prod_{j=1}^m \left( 1 + \eta_{m,j} \tilde{g}_{\min}\right)\\
& = -1 +  \frac{1}{n} \prod_{j=1}^m \left( 1 + \eta_{m,j} \tilde{g}_{\min}\right) = -1 +  \frac{1}{n} \prod_{j=1}^m \left( 1 + b_{d,r}\tilde{g}_{\min}\gamma_{m,j} \right),
\end{split}
\]
where we get the first inequality by dropping the terms, where $h \neq i$.
\end{proof}
\subsection{Tractability}
Now we state and prove our main theorem on QMC tractability.
\begin{theorem}
Let $r > d+1$, and let $\boldsymbol{\gamma}_{m} = (\gamma_{m,1},\ldots, \gamma_{m,m})$ be a collection of positive and uniformly bounded weights; i.e.,  $\gamma^*$ given in (\ref{gamma_star}) is finite . Then
\begin{enumerate}
\item Multivariate integration is strong polynomial tractable in $H_{m, {\bf{\gamma}}_{m}}^{(d,r)}$ iff
\[\limsup_{m \rightarrow \infty} \sum_{j=1}^m \gamma_{m,j} < \infty.
\]
If the above is true, then the $\epsilon$-exponent of strong polynomial tractability is at most 2.
\item Multivariate integration is polynomial tractable in $H_{m, \gamma_{m,j}}^{(d,r)}$ iff
\[ \beta := \limsup_{m \rightarrow \infty} \frac{\sum_{j=1}^m \gamma_{m,j}}{\log(m+1)} < \infty.
\]
If the above is true, then the  $\epsilon$-exponent of polynomial tractability is at most 2 and the  $m$-exponent of polynomial tractability is at most $c_{d,r}\beta$, where $c_{d,r}$ is given by (\ref{cdr}).
\item Multivariate integration is weakly tractable in $H_{m, \gamma_{m,j}}^{(d,r)}$ iff
\[
\lim_{m \rightarrow \infty} \frac{\sum_{j=1}^m \gamma_{m,j}}{m} = 0.
\]
\end{enumerate}
\end{theorem}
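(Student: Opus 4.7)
The plan is to combine the upper bound from Lemma \ref{upperbound} with the lower bound from Lemma \ref{lowerbound} (and $e_{0,m} = 1$ from Lemma \ref{lemmaenm}) to produce the sandwich
\[
\frac{1}{1+\epsilon^{2}}\prod_{j=1}^{m}\bigl(1 + b_{d,r}\tilde{g}_{\min}\gamma_{m,j}\bigr) \;\le\; n(\epsilon,m) \;\le\; 1 + \epsilon^{-2}\Bigl(\prod_{j=1}^{m}(1 + c_{d,r}\gamma_{m,j}) - 1\Bigr)
\]
for every $\epsilon \in (0,1)$. The three parts of the theorem will then all be derived by comparing $\log \prod_{j}(1+c\gamma_{m,j})$ with $\sum_{j}\gamma_{m,j}$ via the elementary two-sided estimate $x/(1+x^{*}) \le \log(1+x) \le x$, valid for $0 \le x \le x^{*}$. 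The hypothesis $\gamma_{m,j} \le \gamma^{*}$ from (\ref{gamma_star}) lets us take $x^{*} := \max(b_{d,r}\tilde{g}_{\min}\gamma^{*},\, c_{d,r}\gamma^{*})$ once and for all, which keeps every implicit constant independent of $m$.

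\textbf{Strong polynomial tractability.} For sufficiency, if $S := \limsup_{m}\sum_{j=1}^{m}\gamma_{m,j} < \infty$ then for any $S' > S$ the upper bound gives $n(\epsilon,m) \le 1 + \epsilon^{-2}(e^{c_{d,r}S'}-1)$ for all sufficiently large $m$, which is (\ref{tract}) with $q = 0$ and $p = 2$. For necessity, suppose $n(\epsilon,m) \le C\epsilon^{-p}$ and fix $\epsilon = 1/2$: the lower bound then forces $\prod_{j}(1+b_{d,r}\tilde{g}_{\min}\gamma_{m,j}) \le (5/4)C\,2^{p}$, and taking logarithms and invoking $\log(1+x) \ge x/(1+x^{*})$ yields a uniform bound on $\sum_{j}\gamma_{m,j}$.

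\textbf{Polynomial tractability.} For sufficiency, suppose $\beta := \limsup_{m}\sum_{j}\gamma_{m,j}/\log(m+1) < \infty$. For any $\beta' > \beta$ and $m$ large, the upper bound gives
\[
n(\epsilon,m) \;\le\; 1 + \epsilon^{-2}\exp\bigl(c_{d,r}\beta'\log(m+1)\bigr) \;=\; 1 + \epsilon^{-2}(m+1)^{c_{d,r}\beta'},
\]
i.e.\ $\epsilon$-exponent $\le 2$ and (letting $\beta' \downarrow \beta$) $m$-exponent $\le c_{d,r}\beta$. For necessity, fix $\epsilon = 1/2$ in the lower bound, compare with $n(1/2,m) \le C\,2^{p}m^{q}$, take logarithms, and apply the same $\log(1+x)\ge x/(1+x^{*})$ bound to conclude $\sum_{j}\gamma_{m,j} = O(\log(m+1))$.

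\textbf{Weak tractability.} From the upper bound, $\log n(\epsilon,m) \le 2\log(1/\epsilon) + c_{d,r}\sum_{j=1}^{m}\gamma_{m,j} + O(1)$. Under $\sum_{j}\gamma_{m,j}/m \to 0$, each of the two main terms on the right, when divided by $\epsilon^{-1}+m$, tends to $0$ as $\epsilon^{-1}+m \to \infty$, proving sufficiency. For necessity, if $\sum_{j}\gamma_{m,j}/m \not\to 0$, pick $\delta > 0$ and a subsequence $m_k \to \infty$ with $\sum_{j}\gamma_{m_k,j} \ge \delta m_k$; applying the lower bound with $\epsilon = 1/2$ and the estimate $\log(1+x) \ge x/(1+x^{*})$ then gives $\log n(1/2,m_k) \ge c\delta m_k$ for some $c > 0$, contradicting $\log n(1/2,m_k)/(2+m_k) \to 0$. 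The main technical issue I anticipate throughout is keeping the constants arising from $\log(1+x)\ge x/(1+x^{*})$ uniform in $m$, which is precisely what the standing assumption $\gamma^{*} < \infty$ secures.
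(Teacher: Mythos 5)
Your proposal is correct and follows essentially the same route as the paper: the upper bound of Lemma \ref{upperbound} together with $\log(1+x)\le x$ for the sufficiency directions (including the $(m+1)^{c_{d,r}\sum\gamma_{m,j}/\log(m+1)}$ rewriting), and the lower bound of Lemma \ref{lowerbound} together with a uniform linear lower bound on $\log(1+x)$ over the bounded range secured by $\gamma^*<\infty$ for the necessity directions. The only caveat, which you share with the paper, is that the necessity arguments implicitly require $\tilde{g}_{\min}>0$ (if $\tilde{g}_{\min}=0$ the lower bound is vacuous), a point neither you nor the paper verifies explicitly.
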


\begin{proof}
To recall definitions, $n(\epsilon, m)$ is the smallest $n$ for which $\bst_1, \ldots, \bst_n$ exist such that $e_{n,m} \leq \epsilon e_{0,m}$. The multivariate integration problem is said to polynomial tractable iff there exist non-negative $C, p,$ and $q$ such that, $ n(\epsilon,m) \le C \epsilon^{-p} m^q$ for all $m = 1,2, \ldots $ and for all $\epsilon \in (0,1)$. The problem is said to be strong polynomial tractable if the above holds with $q = 0$. The problem is weakly tractable iff $\lim_{\epsilon^{-1} + m \rightarrow \infty} \frac{\log n(\epsilon, m)}{\epsilon^{-1} + m} = 0.$

Fix any  $\epsilon \in (0,1)$ and $m \geq 1$. We will first show the if part of all statements. From Lemmas \ref{lemmaenm} and \ref{upperbound}, we have
\begin{align}
\label{upper_bound1}
n(\epsilon,m) &\leq  \frac{1}{\epsilon^2} \prod_{j=1}^m \left( 1 + \gamma_{m,j}c_{d,r}\right) =  \frac{1}{\epsilon^2} \exp \left(\sum_{j=1}^m \log \left( 1 + \gamma_{m,j}c_{d,r}\right)\right) \nonumber \\
& \leq  \frac{1}{\epsilon^2} \exp \left(c_{d,r}\sum_{j=1}^m \gamma_{m,j}\right),
\end{align}
where the last inequality follows from the fact that $\log(1+x) \leq x$ for $x > 0$. Now, if $\limsup_{m \rightarrow \infty} \sum_{j=1}^m \gamma_{m,j} < \infty$, then (\ref{upper_bound1}) gives $n(\epsilon,m) \leq C \epsilon ^{-2}$ for some constant $C$. Since $\epsilon$ and $m$ were arbitrary, we have strong polynomial tractability with $\epsilon$-exponent at most 2. 

To show polynomial tractability we rewrite (\ref{upper_bound1}) as
\begin{align}
\label{upper_bound2}
n(\epsilon,m) \leq  \frac{1}{\epsilon^2} (m+1)^\frac{c_{d,r}\sum_{j=1}^m \gamma_{m,j}}{\log(m+1)}.
\end{align}
If $\beta := \limsup_{m \rightarrow \infty} \frac{\sum_{j=1}^m \gamma_{m,j}}{\log(m+1)} < \infty$, then for any $\delta > 0$ there exists $m_\delta \geq 1$ such that 
\[\frac{\sum_{j=1}^m \gamma_{m,j}}{\log(m+1)} \leq \beta + \delta\qquad \text{ for all } m \geq m_{\delta}.
\]
This combined with the upper bound in (\ref{upper_bound2}) yields $n(\epsilon, m) \leq \epsilon^{-2} (m+1)^{c_{d,r} (\beta + \delta)}$. Thus, there exists a constant $C_\delta$ such that for all $m \geq 1$, $n(\epsilon, m) \leq C_\delta\epsilon^{-2} m^{c_{d,r} (\beta + \delta)}$. Since this holds for arbitrary $\delta > 0$, we have polynomial tractability with $\epsilon$-exponent at most 2 and $m$-exponent at most $c_{d,r}\beta$.

To show weak tractability, note that from (\ref{upper_bound1})
\begin{align}
\label{weak_tract1}
\lim_{\epsilon^{-1} + m \rightarrow \infty} \frac{\log n(\epsilon, m)}{\epsilon^{-1} + m} \leq \lim_{\epsilon^{-1} + m \rightarrow \infty} \frac{c_{d,r}\sum_{j=1}^m \gamma_{m,j} + 2\log \epsilon^{-1}}{\epsilon^{-1} + m}.
\end{align}
For any value of $\epsilon$, if $m \rightarrow \infty$ in (\ref{weak_tract1}), then by the assumption of  $\lim_{m \rightarrow \infty} \frac{\sum_{j=1}^m \gamma_{m,j}}{m} = 0$, we see that the limit is 0. If $m$ is finite and $\epsilon^{-1} \rightarrow \infty$, then also the limit is 0. Thus, we have weak tractability if $\lim_{m \rightarrow \infty} \frac{\sum_{j=1}^m \gamma_{m,j}}{m} = 0$.

Now we prove the only if part of all statements. From Lemma \ref{lowerbound}, we have
\begin{align}
n(\epsilon,m) &\geq
 \frac{1}{1 + \epsilon^2}  \prod_{j=1}^m \left( 1 + b_{d,r}\tilde{g}_{\min}\gamma_{m,j} \right) \nonumber \\
&= \frac{1}{1 + \epsilon^2} \exp\left( \sum_{j=1}^m \log\left(1 + b_{d,r}\tilde{g}_{\min}\gamma_{m,j} \right)\right) 
\end{align}
Let us define $M_{d,r} := b_{d,r}\tilde{g}_{\min}\gamma^*$. Then $ b_{d,r}\tilde{g}_{\min}\gamma_{m,j} \leq M_{d,r}$ for all $j = 1, \ldots, m$. Now, for any $x \in (0, M_{d,r}]$, we have $\log (1 + x) \geq \alpha_{d,r} x$ with $\alpha_{d,r} =  \log(1 + M_{d,r})/M_{d,r}$. Using this, we have
\begin{align}
\label{lower_bound1}
n(\epsilon, m) \geq \frac{1}{1 + \epsilon^2} \exp\left( \alpha_{d,r}b_{d,r}\tilde{g}_{\min}\sum_{j=1}^m \gamma_{m,j} \right).
\end{align}
Note that since $\tilde{g}_{\min} \geq 0$, we have $ \alpha_{d,r}b_{d,r}\tilde{g}_{\min} \geq 0$. Thus, if $\limsup_{m \rightarrow \infty} \sum_{j=1}^m \gamma_{m,j} = \infty$, then the bound in (\ref{lower_bound1}) implies $n(\epsilon,m) \rightarrow \infty$ as $m \rightarrow \infty$ which contradicts strong polynomial tractability.

The argument for polynomial tractability is a bit more subtle. Similar to (\ref{upper_bound2}), we rewrite (\ref{lower_bound1}) as
\begin{align}
\label{lower_bound2}
n(\epsilon,m) \geq \frac{1}{1 + \epsilon^2} (m+1)^{\frac{\alpha_{d,r}b_{d,r}\tilde{g}_{\min}\sum_{j=1}^m \gamma_{m,j}}{\log(m+1)}}. 
\end{align}
Now if $\limsup_{m \rightarrow \infty} \frac{\sum_{j=1}^m \gamma_{m,j}}{\log(m+1)} = \infty$, then there exists a subsequence $\{m_k\}$ such that $ \frac{\sum_{j=1}^{m_k} \gamma_{m_k,j}}{\log(m_k+1)}$ increases to infinity as $k \rightarrow \infty$. Fix any non-negative $C, p$, and $q$. We shall show that there exists an $\epsilon$ and $m$ such that $n(\epsilon,m) > C\epsilon^{-p}m^q$. Choose $\epsilon = 1/2$. Now there exists a $K$ such that for all $k \geq K$
\[
\frac{4}{5}(m_k+1)^{\frac{\alpha_{d,r}b_{d,r}\tilde{g}_{\min}\sum_{j=1}^{m_k} \gamma_{m_k,j}}{\log(m_k+1)}} > C 2^p (m_k + 1)^q 
\]
Thus, we have $n(1/2, m_k) > C 2^p m_k^q$ for all $k \geq K$. Since the choice of $C, p$ and $q$ was arbitrary, we get a contradiction to polynomial tractability. 

To show the only if part for weak tractability, note that if $\lim_{m \rightarrow \infty} \frac{\sum_{j=1}^m \gamma_{m,j}}{m} \neq 0$, then $\limsup_{m \rightarrow \infty} \frac{\sum_{j=1}^m \gamma_{m,j}}{m} > 0$.
From (\ref{lower_bound1}), we have
\begin{align}
\label{weak_tract2}
\limsup_{\epsilon^{-1} + m \rightarrow \infty} \frac{\log n(\epsilon, m)}{\epsilon^{-1} + m} \geq \limsup_{\epsilon^{-1} + m \rightarrow \infty} \frac{\alpha_{d,r}b_{d,r}\tilde{g}_{\min}\sum_{j=1}^m \gamma_{m,j} + \log(1 + \epsilon^{2})^{-1}}{\epsilon^{-1} + m} > 0,
\end{align}
where the last inequality follows by fixing $\epsilon = \epsilon_0$ and taking $m \rightarrow \infty$. This gives a contradiction to weak tractability.

Thus, we have shown that $\limsup_{m \rightarrow \infty} \sum_{j=1}^m \gamma_{m,j} < \infty $, $\limsup_{m \rightarrow \infty} \frac{\sum_{j=1}^m \gamma_{m,j}}{\log(m+1)} < \infty$, and $\lim_{m \rightarrow \infty} \frac{\sum_{j=1}^m \gamma_{m,j}}{m} = 0$ are necessary and sufficient for strong polynomial, polynomial and weak tractability, respectively.  
\end{proof}

\section{Conclusion}
\label{sec5}
Following the approach in \cite{Kuo2005}, we have shown that there exists a sequence of QMC methods for integration over $m$ fold products of simplices with a Monte Carlo rate of convergence. The proof technique uses properties of weighted reproducing kernel Hilbert spaces and orthonormal polynomials defined on simplices. Since the proof is based on an averaging argument, it does not throw any light on a specific construction of such points which achieve the Monte Carlo rate. 

In the special case of a single triangle, $T^2$, Basu and Owen \cite{Basu2014} give explicit constructions with a higher rate of convergence than Monte Carlo. In \cite{Basu2015}, the same authors generalized a  scrambled net procedure to construct points on the product of $m$, $d$-dimensional spaces to obtain a variance rate of $O(n^{-1 -2/d} (\log n)^{m-1})$. Much more work is needed to remove the dependency on $m$. The non-constructive arguments in \cite{Kuo2005, Sloan1998} were followed by explicit constructions \cite{hesse2007component, kuo2002component, sloan2002step,  sloan2002component, sloan2002constructing}.  We hope similar results will follow from this paper as well.

\section*{Acknowledgments}
This work was supported by the U.S. National Science Foundation under grant DMS-1407397. I would like to sincerely thank Prof. Art Owen for his support and discussions. I would also like to thank Subhabrata Sen and Devleena Samanta for their comments, and the anonymous referees, whose constructive reviews have greatly improved the paper.


\section*{References}
\bibliographystyle{elsarticle-harv}

\bibliography{tract2}

\end{document}